\newtheorem{lem}{Lemma}
\newtheorem{mytheor}{Theorem}
\newtheorem{theor}{Theorem}
\newtheorem{rem}{Remark}
\newcommand{\N}{\mathbb{N}}
\newcommand{\Z}{\mathbb{Z}}
\DeclareMathOperator{\pr}{pr}
\DeclareMathOperator{\rank}{rk}
\begin{document}

\author{Andrei Alpeev  \footnote{École Normale Supérieure, 45 Rue d'Ulm, Paris, alpeevandrey@gmail.com}}
\title{Secret sharing on the Poisson-Furstenberg boundary I}

\maketitle
\begin{abstract}
Recently, Vadim Kaimanovich presented a particular example of a measure on a product of two standard lamplighter groups such that the Poisson boundary of the induced random walk is non-trivial, but the boundary on the marginals is trivial. This was surprising since such behavior is not possible for measures of finite entropy. As we show in this paper, this secret-sharing phenomenon is possible precisely for pairs of amenable groups with non-trivial ICC-factors.
\end{abstract}

\section{Introduction}

Given a countable group $\Gamma$ and a measure $\nu$ on that group, we can construct a random walk on the group. First, consider the i.i.d. process $(X_n)_{n\in \N}$ where each $X_i$ has distribution $\nu$ and then define random variables $Z_n = X_1 \cdot \ldots \cdot X_n$. The process $(Y_n)_{n \in \N}$ is called the $\nu$-random walk, its space of trajectories $\Omega = {\Gamma}^{\N}$. The {\em Poisson (or Poisson-Furstenberg) boundary} ot this random walk is defined as the space $\partial(\Gamma, \nu)$ of ergodic components of $\Omega$ under the natural time-shift action. A measure $\nu$ is called {\em non-degenerate} if its support generates $G$ as a semigroup. Under this requirement, another object turns out to be isomorphic to the Poisson boundary: the {\em tail boundary}, that is the factor of $\Omega$ that corresponds to the tail subalgebra. 
We will casually conflate subalgebras and corresponding factor.
In what follows it is always required that measure $\nu$ is non-degenerate so we will use these objects interchangeably and sometimes simply call them {\em the boundary}. We will say that the boundary is trivial if the corresponding subalgebra is mod-0 trivial (equivalently, corresponding factor is essentially a one-point set), we will also call a measure $\nu$ on a group $G$ a {\em Liouville measure} exactly when the boundary $\partial(G, \nu)$ is trivial.

There is a natural action of $\Gamma$ on the space $\Omega$:
$$g \cdot (\omega_1, \omega_2, \ldots) = (g \omega_1, g \omega_2, \ldots ),$$
where $g \in \Gamma$ and $(\omega_1, \omega_2, \ldots) \in \Omega$.
It turns out that we can restrict this action to the boundary and get an action with a quasi-invariant measure.  A factorization $\varphi : \Gamma_1 \to \Gamma_2$ induces an equivariant map from $\partial(\Gamma_1, \nu)$ to $\partial(\Gamma, \varphi(\nu))$, and in particular, the boundary $\partial(\Gamma_2, \varphi(\nu))$ is a subalgebra of $\partial(\Gamma_1, \nu)$
Now, Assume that $\Gamma = G_1 \times G_2$ and $\nu$ is a non-degenerate measure on $G_1 \times G_2$

We have the following natural factorizations:

\[
\begin{tikzcd}
	&\partial(G_1 \times G_2, \nu)\ar[ddl]\ar[ddr]&\\\\
	\partial(G_1, \pr_1(\nu))&&\partial(G_2, \pr_2(\nu))
\end{tikzcd}
\]

A natural question is whether the boundary on $G_1 \times G_2$ is generated by the boundaries on $G_1$ and $G_2$. It turns out that if we require $\nu$ to have finite Shannon entropy, then the answer is affirmative, see \cite[Claim 4.1]{ErFr22} for the proof. This is done using Kaimanovich's conditional entropy criterion \cite[Section 4.6]{Ka00}. It might be that the finite entropy condition is a simple technicality and is not really required. After all, the statement is simply about equality between two $\sigma$-algebras. Surprisingly, this is not the case, recently Kaimanovich presented the following example:

\begin{theor}[\cite{Ka24}]
Let $G_1 = G_2 =\Z/2\Z \wr \Z$ be two copies of the standard lamplighter group.
There is a non-degenerate measure $\nu$ on the product $G_1 \times G_2$ such that the boundary $\partial(G_1 \times G_2, \nu)$ is non-trivial, but boundaries of projection $\partial(G_i, \pr_i(\nu))$, $i = 1,2$ are trivial.
\end{theor}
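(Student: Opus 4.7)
The plan is to exhibit a measure $\nu$ on $G_1 \times G_2$ that implements a secret-sharing scheme: its joint distribution should look like independent Liouville lamplighter walks in each coordinate, while carrying a heavy-tailed coordination across the two factors that encodes a shared random datum. Parametrize elements of each $G_i = (\bigoplus_{\Z}\Z/2\Z) \rtimes \Z$ as (finite lamp configuration, cursor position) and let $\mu$ be a symmetric finitely supported step measure on one lamplighter factor (cursor moves $\pm 1$, possible lamp flip at the cursor); the $\mu$-walk is Liouville because its cursor is a recurrent random walk on $\Z$ and every lamp is eventually overwritten. I would construct $\nu$ as $\nu = (1-\epsilon)(\mu \otimes \mu) + \epsilon \sum_n q_n \,\sigma_n$, where each $\sigma_n$ is supported on ``coordinated moves'' of characteristic scale $2^n$---for instance simultaneous long cursor jumps, or lamp imprints correlated between the two factors---and the weights $q_n$ are summable but give $\nu$ infinite entropy ($q_n \sim 1/n^2$ suffices).

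The argument would then split into two parts. To show each marginal is Liouville one cannot invoke the finite-entropy criterion of Kaimanovich, since $\nu$ and typically $\pr_i(\nu)$ have infinite entropy. The natural route is a coupling of the $\pr_i(\nu)$-walk with the $\mu$-walk that agree outside a sparse set of times; each extra step introduced by a coordinated move, when projected to a single factor, is a bounded perturbation that the recurrent $\mu$-cursor subsequently erases, so every bounded $\pr_i(\nu)$-harmonic function ought to be $\mu$-harmonic and hence constant. For the joint walk, the task is to produce an explicit non-trivial tail event: the coordinated moves affect both coordinates in a correlated fashion that is preserved by the synchronized bulk dynamics, so a statistic comparing the lamp or cursor data of the two factors---for instance at positions outside the ranges of both cursors up to time $t$---should retain infinitely much information as $t \to \infty$, while every single-coordinate statistic of the same kind is washed out.

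The main obstacle is balancing the two halves. The decay of $q_n$, the spatial separation of the coordinated moves, and the speed of the $\mu$-cursor must be tuned so that each perturbation in a single factor is eradicated before a comparable one occurs (keeping the marginals Liouville), yet the joint signature of the coordinated moves must survive in the tail $\sigma$-algebra of the product walk. Exhibiting a concrete tail function that visibly depends on both coordinates and is robust against the bulk noise is the technically delicate step; the cleanest route is probably to arrange the $\sigma_n$ so that their projections act on disjoint ``very distant'' regions of the lamp space, to let the $\mu$-walk annihilate each individual projection, and then to recover the correlation structure jointly from a functional (e.g.\ a parity count) of the two final lamp configurations in those regions.
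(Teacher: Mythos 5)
Your proposal is a programme rather than a proof: the two load-bearing claims --- that each marginal is Liouville and that the joint walk admits an explicit non-trivial tail event --- are both deferred (``ought to be,'' ``should retain,'' ``the technically delicate step''), and it is exactly there that all the work lies. More seriously, the mechanism you propose for marginal triviality looks unsound. You want the recurrent $\mu$-cursor to erase the projection of each coordinated move, but your coordinated moves have characteristic scale $2^n$ with weights $q_n \sim 1/n^2$: by time $T$ the walk has almost surely performed a move of scale far beyond anything the bulk walk can revisit and overwrite before a still larger one occurs. The survival of the record-sized move is precisely what every construction of this type (Frisch--Hartman--Tamuz--Ferdowsi, and the present paper) exploits to get \emph{non}-triviality, so a marginal argument based on subsequent erasure is fighting the very effect you need to preserve jointly. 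You name this tension as ``the main obstacle'' but do not resolve it.

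The construction in Section \ref{sec: nonsymmmetric} resolves it by a different mechanism, which is the ingredient your sketch is missing: the projection of each coordinated move to a single factor is made \emph{individually} almost invariant in distribution, rather than erased afterwards. At scale $i$ the coordinated step is $(f_1 b'_{1,i}\psi_{1,i}(f_2) b''_{1,i},\; f_2 b'_{2,i}\psi_{2,i}(f_1) b''_{2,i})$ with $f_1, f_2$ independent and uniform on F{\o}lner sets $F_{1,i}, F_{2,i}$ that are $(A_{j,i}^{i+1}, 1/i)$-invariant with respect to everything the walk could have produced earlier; each marginal of the dominant record step is then a uniform measure on an almost-invariant set (the Kaimanovich--Vershik/Rosenblatt device), which kills the marginal boundary with no appeal to recurrence (Lemma \ref{lem: simple pair trivial}). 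Joint non-triviality comes from the bijections $\psi_{j,i}$: each coordinate separately sees an independent uniform pair on $F \times S$, but jointly the $F$-part of one coordinate determines the $S$-part of the other, and the switcher elements $b', b''$ make the decomposition of the record step uniquely readable from the product walk, which is what turns your unspecified ``parity count'' into the concrete tail map $p$ feeding Lemma \ref{lem: main non-triviality}. To salvage your outline you would need to replace ``bounded perturbation erased by recurrence'' with ``projection uniform on a F{\o}lner set adapted to the past,'' at which point you have essentially reconstructed the paper's argument.
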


This presents a kind of a secret sharing phenomenon. We have two random processes. Each has a trivial tail and so in some sense does not retain asymptotically any information in its distant future. On the other hand, the distant future of the combined process retains quite a lot of information. 

A natural question arises: under which requirements do we have such an examples. The necessary condition is that both groups are not hyper-FC, see \cite{Ja}. 
A group is called {\em hyper-FC} if it has no ICC factors.
And a group has {\em ICC (infinite conjugacy class) property} if all its non-trivial elements have infinite conjugacy classes. 
See section \ref{sec: FC} for the discussion. It will suffice to say now that finitely-generated groups are hyper-FC exactly when they are virtually nilpotent. Also, it is necessary that both groups are amenable, since this is a requirement for a group to have a non-degenerate Liouville measure, by theorem of Furstenberg \cite[Section 9]{Fu74}. Are these requirements sufficient for the secret-sharing phenomenon? As we will show in this paper, they are. As the main result we establish the following:

\begin{mytheor}\label{thm: simple pair}
Let $G_1$ and $G_2$ be two countable amenable ICC groups. There is a measure of full support $\nu$ on $\Gamma = G_1 \times G_2$ such that the Poisson-Furstenberg boundary $\partial(\Gamma, \nu)$ is nontrivial, moreover $\Gamma$ acts freely on this boundary, and the boundaries of projections $\partial(G_1, \pr_1(\nu))$, $\partial(G_2, \pr_2(\nu))$ are trivial. Also, measure $\nu$ could be taken symmetric.
\end{mytheor}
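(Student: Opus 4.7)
Since \cite{ErFr22} shows that the joint and marginal boundaries coincide whenever $\nu$ has finite Shannon entropy, any $\nu$ witnessing the theorem must have infinite Shannon entropy. The plan is therefore to build $\nu$ as an infinite mixture
$$\nu \;=\; c_0 \,(\mu_1 \otimes \mu_2) \;+\; \sum_{n \geq 1} c_n\, \nu_n,$$
where $\mu_i$ is a fixed symmetric, full-support, non-degenerate Liouville measure on $G_i$ (such measures exist on any countable amenable group by Rosenblatt and by Kaimanovich--Vershik), and each coupling measure $\nu_n$ is supported on a finite ``coding region'' $R_n \subset G_1 \times G_2$ whose size grows rapidly in $n$ while the weights $c_n$ decay correspondingly fast.

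\textbf{Construction of the coupling atoms.} For each $n$ I would fix a large finite key set $S_n$ and a bijection $s \mapsto (a^{(n)}_s, b^{(n)}_s)$ so that $a^{(n)}_s$ and $b^{(n)}_s$ each vary over a single conjugacy class in $G_1$ and $G_2$ respectively; then $\nu_n$ is the uniform measure on the graph of this bijection, symmetrised. The ICC assumption is what makes this possible: each conjugacy class is infinite, so the pairs $(a^{(n)}_s, b^{(n)}_s)$ can be assembled into a ``code'' of arbitrary size. Knowing the joint pair reveals the secret $s$, but each single projection sees only an element of one fixed conjugacy class.

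\textbf{Verification.} Three properties must then be checked. (i) \emph{Trivial marginals.} The projection $\pr_i(\nu)$ is $c_0 \mu_i$ plus a sum of uniform distributions on sets of conjugates. The goal is to apply Kaimanovich's entropy or ray criterion, combined with a conjugation--averaging argument, to transfer the Liouville property of $\mu_i$ to this enlarged measure. (ii) \emph{Non-trivial joint boundary.} At every step where a coupling atom is used, the secret $s$ is recoverable from the joint increment, so the infinite sequence of sampled secrets is a non-trivial tail-measurable observable of the joint random walk. (iii) \emph{Free action.} This should follow from ICC together with the triviality of centres in ICC groups, by analysing the fixed-point sets of a putative non-trivial stabiliser element on the boundary and reducing to a centrality statement that ICC rules out.

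\textbf{Main obstacle.} The hardest step is clearly (i): ensuring that adding infinite-entropy coupling atoms does not destroy the Liouville property of the marginals. This forces a delicate quantitative balance between the weights $c_n$, the sizes $|S_n|$, and the location of $\supp \nu_n$ in the Cayley graphs of $G_i$. ICC is precisely what provides the needed flexibility: any finite neighbourhood in $G_i$ can be avoided when selecting the next conjugate, so the marginal mass contributed by the coupling atoms can be placed where convolution powers of $\mu_i$ already assign comparable mass, and their harmonic effect absorbed into that of $\mu_i$. Making this absorption rigorous is where I expect the bulk of the technical work to lie.
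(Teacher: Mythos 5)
Your high-level diagnosis (infinite entropy is forced, the joint increment must encode a ``secret'' invisible to each marginal) matches the paper's motivation, but the proposal has two genuine gaps, each of which is where the actual work lies.

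First, the claim in step (ii) that ``the infinite sequence of sampled secrets is a non-trivial tail-measurable observable'' does not follow from the secret being recoverable from the joint \emph{increment}. The tail $\sigma$-algebra is generated by the positions $Z_n = X_1\cdots X_n$ for large $n$, not by the increments; an individual increment is in general not a tail-measurable function of the trajectory, and early secrets simply get absorbed into the product. What the paper does instead is arrange, via heavy-tailed ``record'' indices $K_i$ and switcher elements $b'_{j,i}, b''_{j,i}$ (whose existence is exactly what ICC buys, following \cite{FHTF19, ErKa19}), that for a.e.\ trajectory and all large $n$ the position $Z_n$ itself admits a \emph{unique} decomposition exposing the secret attached to the current maximal record, and that this readable data stabilizes along the trajectory. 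This is the content of Lemmata \ref{lem: simple pair sets}--\ref{lem: simple pair p-prob} feeding into the abstract criterion of Lemma \ref{lem: main non-triviality}; without some version of this unique-decomposition-plus-stabilization mechanism your step (ii) does not produce a tail function, and your step (iii) (freeness) has no handle at all.

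Second, step (i) is not merely the ``hardest step'' --- as proposed it is unlikely to work. Kaimanovich's entropy criterion is unavailable (the measure has infinite entropy by design), and there is no argument offered for why adding uniform measures on finite pieces of conjugacy classes, however cleverly placed, can have its ``harmonic effect absorbed'' into a fixed Liouville $\mu_i$; total variation convergence $\lVert h*\nu^{*n}-\nu^{*n}\rVert \to 0$ is not stable under such perturbations. The paper resolves this differently: the marginal of each coupling increment is itself, conditionally on the record value $k$, a convolution $\lambda_{F_{1,k}} * \delta_{b'_{1,k}} * \lambda_{S_{1,k}} * \delta_{b''_{1,k}}$ where $F_{1,k}$ is chosen F\o lner relative to everything accumulated so far ($A_{1,k}^{k+1}$); the independence of $f_1$ and $\psi_{1,k}(f_2)$ seen by a single projection is exactly what makes the marginal a Kaimanovich--Vershik/Rosenblatt-type measure, and approximate invariance of $\lambda_{F_{1,k}}$ gives the total variation decay directly (Lemma \ref{lem: kv triviality}). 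In short, the secret must be hidden not by placing atoms far away, but by making each marginal increment look like an honest F\o lner average; that design choice, together with the record/switcher machinery, is the substance of the proof and is absent from the proposal.
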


Using standard techniques we deduce:

\begin{mytheor}\label{thm: main}
Let $G_1$ and $G_2$ be two countable groups. 
the following are equivalent
\begin{enumerate}
\item There is a non-degenerate measure $\nu$ on $\Gamma = G_1 \times G_2$ such that the Poisson-Furstenberg boundary $\partial(\Gamma, \nu)$ is nontrivial, and the boundaries of projections $\partial(G_1, \pr_1(\nu))$, $\partial(G_2, \pr_2(\nu))$ are trivial.
\item $G_1$ and $G_2$ are amenable groups with non-trivial ICC factor-groups.
\end{enumerate}
\end{mytheor}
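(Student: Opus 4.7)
For $(1)\Rightarrow(2)$, I would observe first that the hypothesis forces each $\pr_i(\nu)$ to be a non-degenerate Liouville measure on $G_i$, so Furstenberg's theorem \cite{Fu74} yields amenability of both $G_1$ and $G_2$. For the ICC-factor condition, I would invoke Jaworski's result \cite{Ja} as quoted in the introduction: a hyper-FC group can never sit as a factor of a secret-sharing pair, because the projection of the product boundary onto a hyper-FC marginal must already recover the full marginal boundary. Hence if the product boundary is non-trivial while each marginal boundary is trivial, neither $G_i$ can be hyper-FC, i.e.\ each admits a non-trivial ICC quotient.

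The direction $(2)\Rightarrow(1)$ is the substantive one, and the plan is to bootstrap from Theorem~\ref{thm: simple pair} along quotient maps. Let $\pi_i\colon G_i\twoheadrightarrow H_i=G_i/N_i$ be surjections onto non-trivial ICC quotients, which exist by hypothesis and which are amenable since $G_i$ is. Applying Theorem~\ref{thm: simple pair} to the amenable ICC pair $(H_1,H_2)$ produces a full-support symmetric measure $\mu$ on $H_1\times H_2$ whose Poisson boundary is non-trivial but whose two marginals are Liouville on $H_i$. One now has to lift $\mu$ to a measure $\nu$ on $G_1\times G_2$ such that $(\pi_1\times\pi_2)_*\nu=\mu$, so that the non-trivial boundary $\partial(H_1\times H_2,\mu)$ appears as a factor of $\partial(G_1\times G_2,\nu)$, while at the same time each $\pr_i(\nu)$ remains Liouville on $G_i$.

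I would construct $\nu$ by coupling $\mu$ with Liouville measures on the kernels. The normal subgroups $N_i$ are amenable as subgroups of amenable $G_i$, so by Kaimanovich--Vershik each carries a non-degenerate Liouville measure $\lambda_i$. Fix set-theoretic sections $\sigma_i\colon H_i\to G_i$ of $\pi_i$ and let $\nu$ be the law of $\bigl(n_1\sigma_1(h_1),\,n_2\sigma_2(h_2)\bigr)$ with $(h_1,h_2)\sim\mu$ and $n_i\sim\lambda_i$ sampled independently. Non-degeneracy of $\nu$ and the identity $(\pi_1\times\pi_2)_*\nu=\mu$ are then immediate, and a final symmetrization $\tfrac12\nu+\tfrac12\check\nu$ preserves them if desired.

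The main obstacle is verifying that $\pr_i(\nu)$ is Liouville on $G_i$: its image under $\pi_i$ is the Liouville measure $\pr_i(\mu)$ on $H_i$, and the ``vertical'' motion along the fibers of $\pi_i$ is driven by the Liouville measure $\lambda_i$ on $N_i$, but one still has to combine the two. I would extract this from a general amenable-lifting statement: if $N\trianglelefteq G$ is amenable and $\mu_H$ is Liouville on $G/N$, then the measure on $G$ obtained by convolving a section-lift of $\mu_H$ with a Liouville measure on $N$ is itself Liouville. The argument proceeds through Kaimanovich's conditional entropy criterion applied to the tower decomposition into the $H$-projection and the $N$-fiber walk; this is the only delicate ingredient beyond Theorem~\ref{thm: simple pair}, and everything else in $(2)\Rightarrow(1)$ is a formal check.
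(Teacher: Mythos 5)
Your direction $(1)\Rightarrow(2)$ is essentially the paper's: Furstenberg's theorem gives amenability, and Lemma~\ref{lem: FC-isomorphism} shows that if, say, $G_1$ were hyper-FC then $\partial(G_1\times G_2,\nu)\cong\partial(G_2,\pr_2(\nu))$ would be trivial, a contradiction. The genuine gap is in $(2)\Rightarrow(1)$, at the lifting step. The ``general amenable-lifting statement'' you propose --- that convolving a section-lift of a Liouville measure on $G/N$ with a Liouville measure on an amenable normal subgroup $N$ yields a Liouville measure on $G$ --- is false. Take $G=\Z/2\Z\wr\Z^3$, $N=\bigoplus_{\Z^3}\Z/2\Z$, $H=\Z^3$: both $N$ and $H$ are abelian, so every measure on them is Liouville, yet the resulting switch-and-walk measure on $G$, which is exactly of your form $\lambda*\sigma_*(\mu_H)$, has non-trivial Poisson boundary because the base walk is transient and the lamp configuration stabilizes. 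Moreover, even where such a statement does hold, your proposed proof via Kaimanovich's conditional entropy criterion cannot run here: the measures produced by Theorem~\ref{thm: simple pair} necessarily have infinite Shannon entropy (finite-entropy measures cannot exhibit the secret-sharing phenomenon, as recalled in the introduction), so the entropy criterion is unavailable for $\mu$ and for any lift of it. A secondary, related slip: you take $H_i$ to be an arbitrary non-trivial ICC quotient, so $N_i$ may be strictly larger than the hyper-FC center, which is precisely why you are forced into an uncontrollable fiber analysis.

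The missing idea is that no Liouville measure on the kernel and no entropy argument are needed if the quotient is chosen correctly. Take $H_i$ to be the \emph{maximal} ICC quotient of $G_i$ (non-trivial because some ICC quotient is, and amenable as a quotient of $G_i$), so that $\ker\pi_i$ is exactly the hyper-FC center of $G_i$ and, by Lemma~\ref{lem: icc of product}, $\ker(\pi_1\times\pi_2)$ is the hyper-FC center of $G_1\times G_2$. Then Lemma~\ref{lem: FC-isomorphism} applies to \emph{any} non-degenerate $\nu$ on $G_1\times G_2$ with $(\pi_1\times\pi_2)_*\nu=\mu$ --- for instance, spread the mass $\mu(h)$ over the fiber $(\pi_1\times\pi_2)^{-1}(h)$ with everywhere-positive weights to get a full-support lift --- and, applied three times (to the product and to each coordinate, using $\pi_i\circ\pr_i=\pr_i\circ(\pi_1\times\pi_2)$), yields $\partial(G_1\times G_2,\nu)\cong\partial(H_1\times H_2,\mu)$ non-trivial and $\partial(G_i,\pr_i(\nu))\cong\partial(H_i,\pr_i(\mu))$ trivial. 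This is the ``standard technique'' the paper invokes when deducing Theorem~\ref{thm: main} from Theorem~\ref{thm: simple pair}; your coupling construction and the delicate lifting lemma are both unnecessary and, as stated, unobtainable.
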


\begin{proof}
That 2 implies 1 constitute the bulk of this paper. In Section \ref{sec: nonsymmmetric} we show the construction without the symmetricity condition and in Section \ref{sec: symmetric} we demonstrate a slight modification to get a symmetric measure.

We will prove that 1 implies 2. Assume that $G_1$ has no non-trivial ICC factors. This implies that $G_1$ is a hyper FC-central group an so, by Lemma \ref{lem: FC-isomorphism}, factorization $G_1 \times G_2 \to G_2$ induces an equivariant isomorphism of boundaries $\partial(G_1 \times G_2 , \nu)$  and $\partial(G_2, \pr_2(\nu))$. We get a contradiction.
\end{proof}

Interestingly, the construction of Kaimanovich gives an asymmetric measure without a clear way to avoid this. Our examples demonstrates that asymmetricity plays no role in this story.

The main construction relies on the ideas from my previous work \cite{A21} where I constructed examples of measures with non-trivial left and trivial right tail boundaries for all amenable groups with non-trivial ICC factors. In turn, that paper is based upon the breakthrough construction \cite{FHTF19} by  Frisch, Hartman, Tamuz and Ferdowsi of a measure with non-trivial boundary for any group with non-trivial ICC factor, but our presentation relies on some clarifications made in the paper \cite{ErKa19} by Erschler and Kaimanovich. We also inject the main idea of the Kaimanovich-Vershik \cite{KaVe83} and Rosenblatt \cite{Ro81} construction of a measure with trivial Poisson boundary on an arbitrary amenable group.

{\bf Acknowledgements.} I'm thankful to Vadim Kaimanovich and Anna Erschler for helpful discussions on the topic. I'm grateful to my late advisor Anatoly Moiseevich Vershik who introduces me to the topic of boundary theory of random walks.

\section{Criteria for triviality and non-triviality of the Poisson-Furstenberg boundary}

Let $G$ be group an let $\nu$ be a measure of full support on $G$.
We first introduce two devices that will help us establish non-triviality and triviality of the Poisson-Furstenberg boundary.

The following lemma is essentially an abstract form of the argument used in \cite{FHTF19} as presented in \cite{ErKa19}.
\begin{lem}\label{lem: main non-triviality}
Let $W_i, W'_i$, for $i \in \N$ be subsets of $G$ such that $W'_i \subset W_i$ and $W_i \cap W_j = \varnothing$ for $i \neq j$. 
For $g \in G$ denote $\rank(g)$ to be $i$ if $g \in W_i$ for some $i \in \N$ and $\rank(g) = 0$ otherwise.
Assume that there is a function $p: \bigcup_{i \in \N} W_i \to G$. We require that
\begin{enumerate}
\item if $p(g)$ is defined then $\rank(p(g)) < \rank(g)$;
\item for every $h \in G$ there is $N$ such that $h W'_i \subset W_i$,and $p(hw) = hp(w)$ for all $i > N$ and $w \in W_i$;
\item for almost every trajectory $\zeta = (z_1, z_2, \ldots)$ there is an $i_0$ such that for all  $i > i_0$ there is $j$ such that $z_i \in W'_j$;
\item for almost every trajectory $\zeta = (z_1, z_2, \ldots)$ there is an $i_0$ such that if $i > i_0$ then either $p(z_{i+1}) = p(z_i)$ or $p(z_{i+1}) = z_i$.
\item the sequence $\rank(z_i)$ is unbounded.
\end{enumerate} 
The above implies that the Poisson-Furstenberg boundary $\partial(G, \nu)$ is anon-trivial and moreover, the action of $G$ on said boundary is essentially-free.
\end{lem}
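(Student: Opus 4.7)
The plan is to attach to each trajectory a tail-measurable, $G$-equivariant element of $G$, from which both non-triviality of the boundary and essential freeness of the $G$-action follow directly. For each level $n$, let $f_n(g) = p^{(k)}(g)$ where $k$ is the least positive integer making $\rank(p^{(k)}(g)) \le n$; condition 1 makes this well-defined on every $g$ of rank $>n$. The candidate tail observable is $\phi_n(\zeta) := \lim_{i \to \infty} f_n(z_i)$.

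The central step is to show that this limit exists almost surely. Condition 4 provides a dichotomy for consecutive indices: in the \emph{descent} case $p(z_{i+1}) = z_i$ the iterated-parent chain of $z_{i+1}$ extends that of $z_i$ by one step (and $\rank(z_{i+1}) > \rank(z_i)$ by condition 1), while in the \emph{sibling} case $p(z_{i+1}) = p(z_i)$ the two chains coincide above their common parent. In both cases the ancestor at any fixed level $\le n$ is preserved from $z_i$ to $z_{i+1}$ provided $\rank(z_i)>n$. Combining with condition 5 — the rank is unbounded, and the descent case strictly raises the current parent's rank, so if descents occur infinitely often then $\rank(z_i)\to\infty$, whereas if descents occur only finitely often then $p(z_i)$ is eventually a fixed $a$ and one takes $n\ge\rank(a)$ — yields stabilization to a value $\phi_n(\zeta)$.

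For equivariance, fix $h \in G$ and choose $n > N(h)$ with $N(h)$ as in condition 2; condition 3 places $z_i$ in some $W'_{j_i}$ for all large $i$, so condition 2 yields $hz_i\in W_{j_i}$ with the same rank and $p(hz_i)=hp(z_i)$. Iterating $p$ down to rank $\le n$ stays in the equivariant regime throughout (the rank remains $>n>N(h)$ at every intermediate step), giving $f_n(hz_i)=hf_n(z_i)$, and in the limit $\phi_n(h\cdot\zeta)=h\phi_n(\zeta)$. Tail-measurability of $\phi_n$ lets it descend to a $G$-equivariant measurable map $\tilde\phi_n:\partial(G,\nu)\to G$. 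If $\tilde\phi_n$ were essentially constant $c$, equivariance together with the freedom to enlarge $n$ beyond $N(h)$ for any prescribed $h\in G$ would force $hc=c$ for every $h\in G$, contradicting freeness of left translation on $G$; hence the boundary is non-trivial. Moreover, any stabilizer $h$ of a boundary point $\xi$ satisfies $h\tilde\phi_n(\xi)=\tilde\phi_n(\xi)$, forcing $h=e$, so the action is essentially free.

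The main obstacle is the stabilization step. Condition 5 gives only unbounded rank, not $\rank(z_i)\to\infty$, so I expect the real work to lie in a careful dichotomy on whether the descent case of condition 4 occurs infinitely often, and in unifying the two sub-cases into a single well-defined and $G$-equivariant $\phi_n$; once that is in place, tail-measurability, equivariance, non-triviality and freeness chain together as above.
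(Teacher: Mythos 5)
Your overall strategy --- extracting the limiting ancestral data of $z_i$ under iteration of $p$ as a tail-measurable, equivariant observable --- is the same in spirit as the paper's, and the stabilization step that you flag as the main obstacle is in fact fine: as you note, the parent ranks $\rank(p(z_i))$ are non-decreasing after $i_0$ (constant on sibling steps, strictly increasing on descents), so either they tend to infinity, in which case $\rank(z_i)\to\infty$ and the level-$n$ ancestor freezes, or $p(z_i)$ is eventually a fixed element and every $f_n(z_i)$ is eventually computed inside one fixed finite chain. The genuine gap is in the equivariance step. You justify $f_n(hz_i)=hf_n(z_i)$ by observing that the \emph{intermediate} iterates have rank $>n>N(h)$, but the problem sits at the terminal element: $f_n(z_i)=p^{(k)}(z_i)$ has, by definition, rank $\le n$, and nothing prevents this rank from being $\le N(h)$. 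Condition 2 gives $hW'_r\subset W_r$ and $p(hw)=hp(w)$ only for $r>N(h)$, so if $\rank\bigl(f_n(z_i)\bigr)\le N(h)$, the element $h\,f_n(z_i)$ may land in a $W_r$ with $r>n$, in which case the chain of $hz_i$ does not terminate at step $k$ and $f_n(hz_i)\ne hf_n(z_i)$. Concretely, if the locked-in ancestors of $\zeta$ have ranks $3,15,20,\dots$, $N(h)=10$ and $n=12$, then $\phi_{12}(\zeta)$ is the rank-$3$ ancestor, and $h$ applied to it is uncontrolled. Since both your non-triviality and your freeness conclusions rest on $\tilde\phi_n(h\xi)=h\tilde\phi_n(\xi)$ holding almost everywhere, this is a real hole rather than a technicality.

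The paper avoids this by never truncating at a rank threshold: it passes to the full limiting chain $\tau(\zeta)=\bigcup_{i>i_0}t(z_i)$, an infinite tail-measurable subsequence of the trajectory (which moreover shows that $\tau$ recovers the entire tail $\sigma$-algebra, not just a non-trivial piece of it), and for freeness it compares $\tau(\zeta)$ with $\tau(h\zeta)$ by looking at the unique element of each chain lying in a single $W_i$ with $i$ large. There equivariance does apply, because membership in $W_i$ itself certifies that the rank equals $i>N(h)$. You can repair your argument along the same lines by replacing $f_n$ (``first ancestor of rank at most $n$'') with $u_i$ (``the ancestor lying in $W_i$, if any''); this is precisely where indexing by the sets $W_i$ rather than by a rank cutoff becomes essential.
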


\begin{rem}
We may only require that $\nu$ is non-degenerate. The requirement that $\nu$ has full support allows us to avoid a minor technical difficulty that the distribution on the space of trajectories is not quasi-invariant with respect to the $G$-action. 
\end{rem}

\begin{proof}
For two finite or infinite sequences $\alpha$ and $\beta$ we write $\alpha \subseteq \beta$ if $\alpha$ is an initial segment of $\beta$. If $\alpha_1 \subseteq \alpha_2 \subseteq \alpha_3 \subseteq \ldots$ is a finite or infinite monotone collection of sequences, we denote $\bigcup_i \alpha_i$ the minimal sequence for which all $\alpha_i$ are initial segments.
	
First, we define a function $t: G \to G^{<\N}$ from the group to the space of finite sequences of group elements as a sequence of all values of iterated application of $p$ in reverse order: $(p^n(g), p^{n-1}(g),\ldots, p^2(g), p(g))$, where $p^n(g) \notin\bigcup_{i \in \N} W_i$. By requirement 1 this is indeed a finite sequence. 
	
Let $\Omega = G^{\N}$ be the space of all the trajectories of the $\nu$-random walk.
We start by defining a function $\tau : \Omega \to G^{\N}$ in the following way. For almost every $\zeta = (z_1, z_2, \ldots) \in \Omega$ there is $i_0$ such that $t(z_i)$ for $i > i_0$ form a nested collection of sequences. We set $\tau(\zeta) = \bigcup_{i>i_0} t(z_i)$. Now it is easy to check, using requirement 4 and 5, that $\tau(\zeta)$ is measurable with respect to the tail subalgebra and $\tau(\zeta)$ is an infinite subsequence of $\zeta$. This actually implies that $\tau$ is an isomorphism since for any tail-measurable bounded function $f$, $f(\zeta) = \lim_{i \in \N} \bar{f}(z_i) = \lim_{i \in \N} \bar{f}((\tau(\zeta))_i)$, where $\bar{f}$ is the harmonic function corresponding to $f$. Now, let us how that the action of $G$ on the boundary is essentially free. Let us prove that for every $h \in G$ we have that $\tau(\zeta) \neq \tau(h \zeta)$ for almost every $\zeta$. We notice that, since the action of $G$ on $\Omega$ is quasi-invariant, all ``almost all'' statements hold for almost all $\zeta$ and $h \cdot \zeta$. Now using requirements 2 and 3 for $\zeta$ we conclude, that if $\tau(\zeta) = (\gamma_1, \gamma_2, \ldots)$ and then there is some $m$ such that $\tau(h \cdot \zeta) = (\lambda_1, \ldots, \lambda_n, h\gamma_m, h \gamma_{m+1}, \ldots)$. It is now easy to check that $\tau(\zeta)$ and $\tau(h \cdot \zeta)$ are distinct, since, for example there is $W_i$ such that the unique element of the sequence $\tau(\zeta)$ in $W_i$ is some $\gamma_j$ with $j > m$ and the unique element of the sequence $\tau(h \cdot \zeta)$ in $W_i$ is $h\gamma_j$.
\end{proof}

\begin{lem}
If for every $h \in G$ we have 
$$\lim_{n\to\infty}\lVert h * \nu^{*n} - \nu^{*n}\rVert = 0,$$
then the Poisson-Furstenberg boundary is trivial.
\end{lem}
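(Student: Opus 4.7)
The plan is to invoke the standard correspondence between the Poisson-Furstenberg boundary and the space of bounded $\nu$-harmonic functions on $G$: the boundary is trivial if and only if every bounded $\nu$-harmonic function is constant. So it suffices to show that the hypothesis forces every bounded harmonic function to be constant.

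Let $f : G \to \mathbb{R}$ be a bounded $\nu$-harmonic function, so that $f(g) = \sum_{x} \nu(x) f(gx)$ for all $g \in G$. Iterating the mean value property yields $f(g) = \sum_{x} \nu^{*n}(x) f(gx)$ for every $n \in \N$. Rewriting this using the pushforward, for the identity $e$ and for any $h \in G$ we have
$$f(e) = \int_G f\, d\nu^{*n}, \qquad f(h) = \int_G f(hx)\, d\nu^{*n}(x) = \int_G f\, d(h * \nu^{*n}).$$
Subtracting these two identities and estimating in total variation,
$$|f(h) - f(e)| = \Bigl|\int_G f\, d(h*\nu^{*n} - \nu^{*n})\Bigr| \leq \lVert f\rVert_\infty \cdot \lVert h*\nu^{*n} - \nu^{*n}\rVert.$$

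By hypothesis, the right-hand side tends to $0$ as $n \to \infty$, so $f(h) = f(e)$. Since $h \in G$ was arbitrary, $f$ is constant. Hence all bounded $\nu$-harmonic functions are constant, and the Poisson-Furstenberg boundary is trivial.

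There is no real obstacle here; the proof is a direct two-line computation once one uses the harmonic-function description of the boundary. The only thing worth a brief remark is the choice of convention (whether one iterates $f(g) = \sum \nu(x)f(gx)$ or its mirror), which affects whether $h*\nu^{*n}$ or $\nu^{*n}*h$ appears; both forms are equivalent for the purpose of the lemma, and the hypothesis as stated matches the convention used above.
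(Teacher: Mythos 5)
Your proof is correct and is the standard argument: the paper itself states this lemma without proof, treating it as the classical Kaimanovich--Vershik/Rosenblatt criterion, and your derivation via the identity $f(h)=\int f\,d(h*\nu^{*n})$ for bounded harmonic $f$ together with the total variation bound is exactly the expected justification. The only implicit ingredient is the equivalence between triviality of the boundary and constancy of all bounded $\nu$-harmonic functions, which you correctly invoke as the Poisson formula.
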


We will need the following obvious variation:
\begin{lem}\label{lem: kv triviality}
Let $S$ any generating set for $G$. If for every $h \in S$ we have 
$$\lim_{n\to\infty}\lVert h * \nu^{*n} - \nu^{*n}\rVert = 0,$$
then the Poisson-Furstenberg boundary is trivial.
\end{lem}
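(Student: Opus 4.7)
The plan is to reduce directly to the previous (unstated-in-proof but just-proved) lemma by showing that the set
$$T = \bigl\{ h \in G : \lim_{n\to\infty}\lVert h * \nu^{*n} - \nu^{*n}\rVert = 0 \bigr\}$$
is a subgroup of $G$. Since by hypothesis $T$ contains the generating set $S$, it must then coincide with $G$, and the stronger hypothesis of the previous lemma is verified.

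The key observation is the left-invariance of the total variation norm: for any signed measure $\mu$ on $G$ and any $g \in G$, one has $\lVert g * \mu \rVert = \lVert \mu \rVert$. From this, closure of $T$ under products follows at once from the triangle inequality: for $h_1, h_2 \in T$,
\begin{align*}
\lVert h_1 h_2 * \nu^{*n} - \nu^{*n}\rVert
&\le \lVert h_1 * (h_2 * \nu^{*n} - \nu^{*n})\rVert + \lVert h_1 * \nu^{*n} - \nu^{*n}\rVert \\
&= \lVert h_2 * \nu^{*n} - \nu^{*n}\rVert + \lVert h_1 * \nu^{*n} - \nu^{*n}\rVert,
\end{align*}
which tends to $0$. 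Closure under inverses is even shorter: if $h \in T$, then
$$\lVert h^{-1} * \nu^{*n} - \nu^{*n}\rVert = \lVert h^{-1} *(\nu^{*n} - h * \nu^{*n})\rVert = \lVert \nu^{*n} - h*\nu^{*n}\rVert \to 0,$$
so $h^{-1} \in T$. The identity is clearly in $T$, so $T$ is indeed a subgroup.

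Since $S \subseteq T$ and $S$ generates $G$, we obtain $T = G$, and then the preceding lemma gives triviality of the Poisson-Furstenberg boundary. There is no real obstacle in this argument; everything hinges on the elementary left-invariance of $\lVert \cdot \rVert$ under the convolution action of $G$ on signed measures, which is why the author flags this as an ``obvious variation.''
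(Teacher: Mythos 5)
Your proof is correct and is exactly the argument the paper leaves implicit when it calls the lemma an ``obvious variation'': the set $T$ of elements $h$ with $\lVert h*\nu^{*n}-\nu^{*n}\rVert\to 0$ is a subgroup (by left-invariance of the total variation norm together with the triangle inequality), so containing a generating set forces $T=G$ and the preceding lemma applies. No gaps; the computation $h_1h_2*\nu^{*n}-\nu^{*n}=h_1*(h_2*\nu^{*n}-\nu^{*n})+(h_1*\nu^{*n}-\nu^{*n})$ and the inverse step are both valid.
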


\section{Hyper FC-center of the direct sum}\label{sec: FC}
Let $G$ be a group. It has a maximal ICC quotient, that is an ICC group $\Gamma$ together with an epimorphism $\phi : G \to \Gamma$ such that for if $\Gamma'$ is an ICC group and $\phi': G \to \Gamma'$ is an epimorhism, then there is a homomorphism $\phi'' : \Gamma \to \Gamma'$ such that $\phi' = \phi'' \circ \phi$.
In order to see this we first define the FC-center of a group as the subgroup of all elements that have finite conjugacy classes. trivially, these elements should be in the kernel of any epimorphism to an ICC group. We define the following chain of groups: 
$$ G^0 \to G^1 \to \ldots,$$
indexed by ordinal numbers in such a way that $G^0 = G$, $G^{\alpha+1}$ is the factor of $G^\alpha$ by the FC-center of $G^{\alpha}$, and $G^{\beta}$ for a limit ordinal $\beta$ is the limit of the diagram formed by all $G^{\alpha}$ with $\alpha < \beta$. This sequence trivially stabilizes, at a (possibly trivial group) $G^{\alpha_0}$. It is easy to see that $G^{\alpha_0}$ is an ICC group.
It is also easy to show inductively that any epimorphism from $G$ to an ICC group factors through every $G^{\alpha}$, which implies that $G^{\alpha_0}$ is indeed the maximal ICC factor of $G$. The hyper FC-center of a group is defined as the kernel of the canonical epimorphism to its maximal ICC factor. 
Let $G_i$, $i \in I$ be a collection of groups. 
\begin{lem}\label{lem: icc of product}
The maximal ICC factor of a direct sum of group $\bigoplus_{i \in I} G_i$ is the direct sum of their maximal ICC factors $\bigoplus_{i \in I} \Gamma_i$. 
\end{lem}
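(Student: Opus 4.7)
The plan is to establish by transfinite induction on $\alpha$ the identity
\[
\Bigl(\bigoplus_{i\in I} G_i\Bigr)^{\alpha} \;=\; \bigoplus_{i\in I} (G_i)^{\alpha},
\]
where the superscript denotes the ordinal-indexed tower built in the paragraph preceding the lemma. Granted this, if $\alpha_i$ is a stabilization ordinal for each $G_i$ and $\beta = \sup_i \alpha_i$, then $(\bigoplus_i G_i)^{\beta} = \bigoplus_i \Gamma_i$. A direct sum of ICC groups is itself ICC: for any nontrivial $(g_i)$ pick $i_0$ with $g_{i_0}\neq e$; conjugating by elements supported only at coordinate $i_0$ produces infinitely many distinct conjugates, by ICC of $\Gamma_{i_0}$. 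Hence $\bigoplus_i \Gamma_i$ is an ICC quotient, and since every ICC quotient factors through each $(\bigoplus_i G_i)^{\alpha}$ (by the argument sketched just before the lemma), it is the maximal one.

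The heart of the induction is the successor step, which reduces to the identity
\[
\FC\Bigl(\bigoplus_{i\in I} H_i\Bigr) \;=\; \bigoplus_{i\in I} \FC(H_i)
\]
for any family $\{H_i\}_{i\in I}$. Conjugation on the direct sum acts coordinatewise, and an element $(h_i)$ with finite support $S$ has conjugacy class whose $j$-coordinates, for $j\in S$, traverse the whole $H_j$-conjugacy class of $h_j$ (tested by conjugators supported only at $j$). So finiteness of the joint conjugacy class is equivalent to finiteness of each coordinate's conjugacy class. Quotienting $\bigoplus_i (G_i)^{\alpha}$, identified with $(\bigoplus_i G_i)^{\alpha}$ by the inductive hypothesis, by this common FC-center yields $\bigoplus_i (G_i)^{\alpha+1}$ coordinatewise.

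For a limit ordinal $\beta$, I describe each $G^{\alpha}$ uniformly as $G/K^{\alpha}$ where $K^{\alpha}$ is the kernel of the canonical epimorphism; then $K^{\beta} = \bigcup_{\alpha<\beta} K^{\alpha}$. The step needed is the interchange $\bigcup_{\alpha<\beta}\bigoplus_i K^{\alpha}_i = \bigoplus_i \bigcup_{\alpha<\beta} K^{\alpha}_i$. The nontrivial inclusion uses finite support: an element of the right-hand side is supported on finitely many coordinates $j_1,\dots,j_n$, with the $j_k$-th entry lying in some $K^{\alpha_k}_{j_k}$; since $\beta$ is a limit ordinal, $\max_k \alpha_k<\beta$, placing the whole element inside one of the $\bigoplus_i K^{\alpha}_i$ already appearing in the union.

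The main obstacle I anticipate is precisely the coherence of the transfinite construction with infinite direct sums at limit stages: when $I$ is infinite, the individual stabilization ordinals $\alpha_i$ can be unbounded, so one genuinely needs the finite-support hypothesis to perform the limit-step interchange above. The analogous statement for the unrestricted direct product would fail, and this is the only place in the argument where the distinction matters.
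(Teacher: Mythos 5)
Your proposal is correct and follows essentially the same route as the paper's (much terser) proof: both rest on the observation that the FC-center of a direct sum is the direct sum of the FC-centers, propagated transfinitely to identify the hyper-FC-center, together with the fact that a direct sum of ICC groups is ICC. You simply supply the details (coordinatewise conjugacy classes, the kernel-union interchange at limit ordinals via finite support) that the paper leaves implicit.
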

\begin{proof}
We first observe that the direct sum of ICC factors is an ICC factor of $\bigoplus_{i \in I} G_i$. To see that this is the maximal ICC factor we note that the FC-center of a direct sum of groups is the direct sum of their FC-centers. inductively, we observe that the hyper FC-center is the direct sum of hyper FC-centers.
\end{proof}
For the following see Jaworski \cite[Lemma 4.7]{Ja}:

\begin{lem}\label{lem: FC-isomorphism}
Let $G$ be a group and $\Gamma$ be its factor with the canonical epimorphism $\varphi: G \to \Gamma$ such that $\ker \varphi$ is a subset of the hyper-FC center of $G$. Then for every non-degenerate measure $\nu$ on $G$, $\varphi$ induces the natural isomorphism between Poisson-Furstenberg boundaries $\partial(G ,\nu)$ and $\partial(\Gamma, \varphi(\nu))$.
\end{lem}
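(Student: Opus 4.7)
The plan is a transfinite induction along the hyper-FC series $\{G^{(\alpha)}\}$ from Section~\ref{sec: FC}: $G^{(0)}=G$, $G^{(\alpha+1)} = G^{(\alpha)}/\mathrm{FC}(G^{(\alpha)})$, and $G^{(\beta)} = \varprojlim_{\alpha<\beta} G^{(\alpha)}$ at limit ordinals, stabilizing at some $\alpha_0$ with $\ker(G \twoheadrightarrow G^{(\alpha_0)})$ equal to the hyper-FC center. Since $\varphi$ factors through $G \twoheadrightarrow G^{(\alpha_0)}$, it suffices to show by induction on $\alpha$ that each canonical quotient $\pi_\alpha\colon G\to G^{(\alpha)}$ induces an isomorphism $\partial(G,\nu)\cong \partial(G^{(\alpha)},\pi_\alpha(\nu))$.

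At a limit ordinal $\beta$, the sub-$\sigma$-algebra of $\partial(G,\nu)$ corresponding to $\partial(G^{(\beta)},\pi_\beta(\nu))$ contains each of the subalgebras corresponding to $\partial(G^{(\alpha)},\pi_\alpha(\nu))$ for $\alpha<\beta$; by the inductive hypothesis each of these is already the full algebra, so the limit step is immediate.

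The successor step is the substantive part: one must show that if $N$ is the FC-center of a group $H$ and $\mu$ is non-degenerate on $H$, then every bounded $\mu$-harmonic function on $H$ is right-$N$-invariant. Fix $n\in N$ with finite $H$-conjugacy class $C=\{n_1,\dots,n_k\}$. The normal closure $\langle C\rangle\subseteq N$ is finitely generated and FC, hence central-by-finite by B.~H.~Neumann's theorem, containing a central subgroup $A$ of finite index. The plan is to derive right-$A$-invariance of every bounded $\mu$-harmonic function from a Choquet--Deny-type extremality argument applied to the abelian $A$-action on the compact convex set of bounded harmonic functions of a given norm, and then to propagate this to right-$N$-invariance by averaging over the finite quotient $\langle C\rangle/A$ together with the permutation action of $H$ on $C$ by conjugation.

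The principal obstacle is exactly this Choquet--Deny step. Classical Choquet--Deny concerns random walks on an abelian group, whereas here $\mu$ is supported on the ambient group $H$, and right translation by $A$ does not commute with the Markov operator $P_\mu$ unless $A$ is central in all of $H$. Circumventing this requires the finer analysis carried out in Jaworski's Lemma~4.7 \cite{Ja}, which exploits the finiteness of $C$ to produce a quasi-equivariance property substituting for commutativity of $A$ with $P_\mu$; this is the point at which I would defer to his argument rather than reprove it.
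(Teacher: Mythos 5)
The paper offers no proof of this lemma at all: it is stated with a bare citation to Jaworski \cite[Lemma 4.7]{Ja}. So your deferral of the substantive step to that same source matches what the paper does, and your diagnosis of the crux is accurate: right translation by the finite-index central subgroup $A\le\langle C\rangle$ does not commute with the Markov operator $P_\mu$ because $A$ need not be central in $H$, which is exactly why a naive Choquet--Deny argument fails and Jaworski's finer analysis is needed.

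Two directional slips in your reduction should be fixed, however. First, at a limit ordinal $\beta$ the group $G^{(\beta)}$ is the direct limit (colimit) $G/\bigcup_{\alpha<\beta}\ker\pi_\alpha$ of the chain of quotients, not an inverse limit. Second, and more substantively, the containment in your limit step goes the wrong way: since $G^{(\beta)}$ is a \emph{further} quotient of each $G^{(\alpha)}$, the subalgebra corresponding to $\partial(G^{(\beta)},\pi_\beta(\nu))$ is contained in, not containing, each subalgebra corresponding to $\partial(G^{(\alpha)},\pi_\alpha(\nu))$; knowing the latter are all full therefore does not conclude the limit step as you wrote it. The step is still easy, but the correct argument is: a bounded $\nu$-harmonic function descends to $G^{(\beta)}$ iff it is right-invariant under $\ker\pi_\beta=\bigcup_{\alpha<\beta}\ker\pi_\alpha$, and by the inductive hypothesis it is right-invariant under each $\ker\pi_\alpha$, hence under the union. (Likewise, it is $\pi_{\alpha_0}$ that factors through $\varphi$, not vice versa; the reduction to the full hyper-FC quotient then follows by sandwiching the corresponding subalgebras $\partial(G^{(\alpha_0)})\subseteq\partial(\Gamma,\varphi(\nu))\subseteq\partial(G,\nu)$.)
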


\section{Example of asymmetrical measure on the product of two groups}\label{sec: nonsymmmetric}




Let $A$ be a subset of a group $G$ and $\delta > 0$ be a number. We say that a subset $F$ of $G$ is $(A,\delta)$-invariant if it is finite, nonempty and $\lvert AF \setminus F\rvert < \delta \lvert F\rvert$.
Let $G$ be a group and $A$ be finite subset. We say that $b \in G$ is an $A$-switcher if $A \cap AbA = \varnothing$ and for every $a'_1, a'_2, a''_1, a''_2 \in A$ the equality $a'_1 b a'_2 = a''_1 b a''_2$ implies that $a'_1 = a''_1$ and $a'_2 = a''_2$.

The following lemma is a simplified version of \cite[Proposition 2.5]{FHTF19} and \cite[Poposition 4.25]{ErKa19}, 
\begin{lem}
Let $G$ be a non-trivial ICC group. For every finite subset $A$ of $G$ there is an $A$-switcher. 
\end{lem}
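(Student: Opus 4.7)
The plan is to rewrite the two defining conditions of an $A$-switcher as the requirement that $b$ avoid a finite union of cosets of subgroups of infinite index in $G$, and then conclude with B.\,H. Neumann's covering lemma.

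First I unpack the injectivity condition. The identity $a_1'ba_2' = a_1''ba_2''$ rearranges to $b^{-1}xb = y$ with $x = (a_1')^{-1}a_1'' \in A^{-1}A$ and $y = a_2'(a_2'')^{-1} \in AA^{-1}$. So the condition that $a_1'ba_2' = a_1''ba_2''$ forces $a_1'=a_1''$ and $a_2'=a_2''$ is equivalent to $b^{-1}xb \neq y$ for every $x \in A^{-1}A \setminus \{e\}$ and every $y \in AA^{-1}$. Meanwhile $A \cap AbA = \varnothing$ is just $b \notin A^{-1}AA^{-1}$, a finite set of exceptions.

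Next, for fixed $x \in A^{-1}A \setminus \{e\}$ and $y \in AA^{-1}$, let $B_{x,y} = \{b \in G : b^{-1}xb = y\}$. If this set contains some $b_0$, then $b \in B_{x,y}$ iff $bb_0^{-1}$ commutes with $x$, so $B_{x,y}$ is either empty or a right coset $C_G(x)b_0$ of the centralizer of $x$. Because $G$ is ICC and $x \neq e$, the conjugacy class of $x$ is infinite, whence $[G : C_G(x)] = \infty$.

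The set of elements $b \in G$ that fail to be $A$-switchers is therefore contained in the union of the finite set $A^{-1}AA^{-1}$ and the finitely many nonempty cosets $B_{x,y}$ as $(x,y)$ ranges over the finite set $(A^{-1}A \setminus \{e\}) \times AA^{-1}$. Each such $B_{x,y}$ is a coset of a subgroup of infinite index; and since a nontrivial ICC group is infinite, the finite set $A^{-1}AA^{-1}$ is itself covered by finitely many cosets of the trivial subgroup, which also has infinite index. By B.\,H. Neumann's lemma, a group cannot be covered by finitely many cosets of subgroups of infinite index, so some $b \in G$ avoids all these bad cosets; this $b$ is an $A$-switcher. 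The only nontrivial ingredient is the joint use of Neumann's covering lemma and the ICC hypothesis to guarantee that the centralizers $C_G(x)$ are of infinite index; everything else is routine algebraic bookkeeping.
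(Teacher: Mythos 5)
Your argument is correct. The translation of the two switcher conditions is accurate: $A\cap AbA=\varnothing$ is exactly $b\notin A^{-1}AA^{-1}$, and the cancellation property is exactly the requirement that $b^{-1}xb\neq y$ for all $x\in A^{-1}A\setminus\{e\}$ and $y\in AA^{-1}$ (the case $x=e$ forces $y=e$ and hence $a_2'=a_2''$ automatically). The observation that each nonempty solution set $B_{x,y}$ is a right coset of the centralizer $C_G(x)$, which has infinite index precisely because the conjugacy class of $x\neq e$ is infinite, together with B.~H.~Neumann's theorem that a group is never covered by finitely many cosets of infinite-index subgroups, closes the argument; and you correctly handle the finite exceptional set $A^{-1}AA^{-1}$ by noting a nontrivial ICC group is infinite. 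Note that the paper itself gives no proof of this lemma --- it only cites \cite{FHTF19} and \cite{ErKa19} --- so your write-up supplies a proof where the paper defers to the literature; the coset-covering route you take is the standard one (essentially that of the cited references), and it has the advantage of working verbatim for arbitrary ICC groups and of adapting immediately to the super-switcher variant used in Section~\ref{sec: symmetric}, since the extra conditions involving $b^{-1}$ only add finitely many more cosets of the same centralizers to the excluded set.
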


First let us fix a sequence of pairs $(c_{1,i},c_{2,i})_{i \in \N}$, $c_{j,i} \in G_j$ for $i \in \N$ and $j = 1,2$, that enumerates all elements of $G_1 \times G_2$. 

We set $A_{1,1} = \lbrace 1_{G_1}\rbrace$ and$A_{2,1} = \lbrace 1_{G_3}\rbrace$. For each $i \geq 1$ and $j \in \lbrace 1, 2\rbrace$:
\begin{enumerate}
\item let $F_{j,i}$ be an $(A_{j,i}^{i+1}, 1/i)$-invariant subset of $G_j$;
\item let $S_{j,i}$ be any subset of $G_j$ such that $\lvert S_{j,i}\rvert = \lvert F_{2-j,i} \rvert$;
\item let $b'_{j,i} \in G_j$ be an $(A_{j,i} \cup S_{j,i} \cup F_{j,i})^{i+2}$ - switcher;
\item let $b''_{j,i} \in G_j$ be an $(A_{j,i} \cup S_{j,i} \cup F_{j,i} \cup \lbrace b'_{j,i}\rbrace)^{2i+8}$-switcher;
\item let $A_{j,i+1} = A_{j,i} \cup F_{j,i}b'_{j,i} S_{j,i} b''_{j,i} \cup \lbrace c_{j,i}\rbrace$;
\end{enumerate} 

We also fix arbitrary bijections $\psi_{}: F_{2-j,i} \to S_{2-j,i}$, for $i \in \N$ and $j = 1,2$.

Let $(k_i)_{i \in \N}$, $k_i \in \N$ be a sequence. We say that $i$ is a record-time for the sequence if $k_i > k_j$ for $j < i$. We say that $i$ is a non-strict record-time if $k_i \geq k_j$ for all $j < i$.
We also will say that the pair $(i, k_i)$ is a a record and sometime that simply $k_i$ is a record. We say that the record $k_i$ (and the corresponding record-time $i$) is simple if $k_i < k_j$ for all non-strict record-times $j > i$.

Now let $K$ be an $\N$-valued random variable such that $\Pr(K=k) = k^{5/4}/c$ ($c$ is a normalization constant).
Let $(K_i)_{i \in \N}$ be the process of i.i.d. copies of $K$.

\begin{lem}
For almost every trajectory of the process $k_i$ there is a number $i_0$ such that 
\begin{enumerate}
\item all record-times $i \geq i_0$ are simple;
\item $\max\lbrace k_1, \ldots, k_i\rbrace > i$ for all $i \geq i_0$.
\end{enumerate}
\end{lem}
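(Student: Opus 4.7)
The plan is to prove each assertion separately by a direct application of the first Borel--Cantelli lemma, using the heavy-tailed asymptotic of $K$. Write $p_k := \Pr(K = k) = c^{-1} k^{-5/4}$ and $q_k := \Pr(K \ge k) = \sum_{j \ge k} p_j$. Comparing with $\int_k^\infty x^{-5/4}\,dx = 4 k^{-1/4}$ gives $q_k \ge c_1 k^{-1/4}$ for all $k \ge 1$, hence $p_k / q_k \le c_2 k^{-1}$.

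For assertion (2), I would estimate
$$\Pr\bigl(\max(K_1, \dots, K_i) \le i\bigr) = (1 - q_{i+1})^i \le \exp(-i\, q_{i+1}) \le \exp(-c_1 i^{3/4}),$$
which is summable in $i$, so by Borel--Cantelli almost surely only finitely many such $i$ exist, giving condition~(2).

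For assertion (1) the key observation is to read off non-simplicity from the subsequence $(K_{j_1}, K_{j_2}, \dots)$ of values $\ge k$ extracted from $(K_j)_{j \in \N}$. This subsequence is a.s.\ infinite and its entries are i.i.d.\ with law $p_j / q_k$ for $j \ge k$. A value $k$ is a non-simple record of $(K_j)$ exactly when the first two entries of this subsequence both equal $k$: the first equalling $k$ says $k$ is a strict record (the first visit to the range $[k, \infty)$ lands precisely at $k$), and the second equalling $k$ rather than $>k$ is exactly the failure of simplicity. Hence
$$\Pr(k \text{ is a non-simple record}) = (p_k / q_k)^2 \le c_2^2\, k^{-2},$$
which is summable, so a.s.\ only finitely many values $k$ occur as non-simple records. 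Since distinct records have distinct values, only finitely many record-times are non-simple; taking $i_0$ larger than the last such record-time and the threshold from (2) yields both conclusions simultaneously.

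The only genuinely new step is the reformulation of (1) in terms of the i.i.d.\ subsequence of large values; once this is in place the estimates follow immediately from the polynomial tail of $K$, and neither Borel--Cantelli application presents a real obstacle. The exponent $5/4$ is precisely calibrated so that $p_k/q_k$ decays like $k^{-1}$ and $(p_k/q_k)^2$ becomes summable while the tail $q_k$ is still fat enough to force $\max(K_1,\dots,K_i) > i$ eventually.
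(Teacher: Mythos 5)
Your proof is correct. Note that the paper actually states this lemma without any proof (the only hint of the intended argument is the reference to Borel--Cantelli before Lemma \ref{lem: stabilization}), so there is nothing to compare against; your two Borel--Cantelli applications, and in particular the clean reformulation of non-simplicity of the record at value $k$ as the event that the first two terms of the i.i.d.\ subsequence of values $\ge k$ both equal $k$, supply exactly the missing argument, and you have also correctly read the paper's $\Pr(K=k)=k^{5/4}/c$ as the evident typo for $k^{-5/4}/c$.
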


We construct a coupled with $K$ random variable $Y$ that takes two values: ``blue'' with probability $1-2^{-K}$ and ``red'' with probability $2^{-K}$.

Consider the process $(K_i, Y_i)_{i \in \N}$ of i.i.d. copies of the pair $(K,Y)$.
We say that a trajectory $(k_i,y_i)$ of this process stabilizes if there is $i_0$(we will call it a stabilization time) such that for all $i > i_0$:
\begin{enumerate}
\item a maximal value $k_m$ among $k_1, \ldots, k_i$ is a simple record;
\item $\max\lbrace k_1, \ldots, k_i\rbrace > i$ for all $i \geq i_0$;
\item $y_m = \text{``blue''}$ for the maximal record-time $m$ on the segment $1, \ldots, i$.
\end{enumerate}

It follows from the previous lemma and the Borel-Cantelli lemma that

\begin{lem}\label{lem: stabilization}
Almost every trajectory of the process $(K_i, Y_i)_{i \in \N}$ stabilizes.
\end{lem}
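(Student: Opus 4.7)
The plan is to combine the preceding lemma with a Borel--Cantelli argument applied to the red/blue coloring. Conditions (1) and (2) in the definition of stabilization are verbatim the two conclusions of the previous lemma and therefore hold almost surely after some threshold. So only condition (3) needs to be established: the maximal record-time in $1, \ldots, i$ is colored blue for all sufficiently large $i$.

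To that end I would condition on the entire sequence $(K_i)_{i \in \N}$. Since the pairs $(K_i, Y_i)$ are i.i.d., conditionally on $(K_i)_{i\in\N}$ the variables $Y_i$ are independent with $\Pr(Y_i = \text{red} \mid K_i = k) = 2^{-k}$. Enumerate the record-times of the $K$-trajectory in increasing order as $m_1 < m_2 < \ldots$. By the very definition of a record, the values $K_{m_j}$ form a strictly increasing sequence of positive integers, so $K_{m_j} \geq j$. Therefore
$$\sum_{j=1}^{\infty} \Pr\bigl(Y_{m_j} = \text{red} \bigm| (K_i)_{i\in\N}\bigr) \;=\; \sum_{j=1}^{\infty} 2^{-K_{m_j}} \;\leq\; \sum_{j=1}^{\infty} 2^{-j} \;<\; \infty,$$
and the conditional Borel--Cantelli lemma gives that almost surely only finitely many records are red.

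On the intersection of this full-measure event with the event furnished by the previous lemma, pick $i_0$ large enough that past $i_0$ every record-time is simple, $\max\{k_1, \ldots, k_i\} > i$ holds, and no record set after $i_0$ is red. Simplicity of all records past $i_0$ forces the running maximum on the prefix $1, \ldots, i$ to be attained at a unique index $m$, namely the latest record-time $\leq i$. Since the running maximum eventually strictly exceeds any fixed earlier value, this index $m$ lies past $i_0$ for all large $i$, and hence $y_m = \text{blue}$. The only technical subtlety is bookkeeping: the maximal record-time $m$ is the latest record set so far rather than $i$ itself, so one has to verify that $m$ lies in the region where all three conclusions are already in force; this is automatic once the running max has swamped any pre-$i_0$ records.
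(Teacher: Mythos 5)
Your proof is correct and follows exactly the route the paper indicates: the paper's justification is literally ``the previous lemma and the Borel--Cantelli lemma,'' and you supply the details --- conditions (1)--(2) from the record-time lemma, and condition (3) via conditioning on $(K_i)$ and summing $2^{-K_{m_j}}\le 2^{-j}$ over record-times. The bookkeeping point you flag (that the argmax index eventually lies past $i_0$ because the running maximum tends to infinity) is handled correctly, so nothing is missing.
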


Let $X$ be $G_1 \times G_2$-valued random variable coupled to the pair $(K,Y)$ in the following way. Assume $K=k$. If $Y = \text{``red''}$, then $X = (c_{1,k}, c_{c_k})$, else if $Y = \text{``blue''}$, then we do the following.
Let $f_{1,k}$ and $f_{2,k}$ be independently and uniformly randomly distributed in $F_{1,k}$ and $F_{2,k}$ respectively. We put $X = (f_{1,k}b'_{1,k}\psi_{1,k}(f_{2,k})b''_{1,k},f_{2,k}b'_{2,k}\psi_{2,k}(f_{1,k})b''_{2,k})$. It is worth observing, that for each $j=1,2$, conditionally to $K = k$ and $Y = \text{``blue''}$, the distribution $\pr_j(X)$ is the same as of $f_{j,k}b'_{j,k}s_{j,k}b''_{j,k}$, where $f_{j,k}$ is distributed uniformly in $F_{j,k}$ and $s_{j_k}$ is independently uniformly distributed in $S_{j,k}$.
Now, the distribution of $X$ is the measure $\nu$ on $G_1 \times G_2$ we wanted to construct, but in the proofs it would be convenient for us to consider the i.i.d. process $(X_i)$ as a marginal of the i.i.d. triple-process $(K_i, Y_i, X_i)$. 

\begin{lem}\label{lem: simple pair trivial}
Measure $\pr_j(\nu)$ on $G_j$ is Liouville.
\end{lem}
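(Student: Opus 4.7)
The plan is to apply the Kaimanovich--Vershik criterion (Lemma \ref{lem: kv triviality}) with generating set $S = G_j$; thus I must show $\lVert h * \pr_j(\nu)^{*n} - \pr_j(\nu)^{*n}\rVert \to 0$ as $n \to \infty$ for every $h \in G_j$. The key structural feature of $\pr_j(\nu)$ is that, conditional on $K = k$ and $Y = \mathrm{blue}$, the step $\pr_j(X)$ is uniformly distributed on $T_{j,k} := F_{j,k} b'_{j,k} S_{j,k} b''_{j,k}$, whose cardinality is $|F_{j,k}||S_{j,k}|$ by the switcher property; and since $u T_{j,k} \triangle T_{j,k} = (uF_{j,k} \triangle F_{j,k}) b'_{j,k} S_{j,k} b''_{j,k}$, the uniform measure on $T_{j,k}$ inherits the $(A_{j,k}^{k+1}, 1/k)$-invariance of $F_{j,k}$.

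I would exploit this by decomposing trajectories via the record structure. Given the i.i.d.\ triples $(K_i, Y_i, X_i)_{i=1}^n$, let $i^*$ be the smallest argmax of $K_1, \ldots, K_n$, write $k^* = K_{i^*}$, and set $L = \pr_j(X_1)\cdots\pr_j(X_{i^*-1})$, $R = \pr_j(X_{i^*+1})\cdots\pr_j(X_n)$. Fix $M_n \to \infty$ with $n/M_n \to 0$ (e.g.\ $M_n = n^2$) and call the event $\mathcal{E}_n$ \emph{good} if $k^* \geq M_n$, $Y_{i^*} = \mathrm{blue}$, and $h \in A_{j,k^*}$. The heavy tail of $K$ makes $k^*$ typically grow polynomially in $n$, and $\bigcup_k A_{j,k} = G_j$ because step 5 of the construction inserts $c_{j,i}$ into $A_{j,i+1}$ while $(c_{j,i})_i$ enumerates $G_j$; a routine tail computation then gives $\Pr(\mathcal{E}_n) \to 1$. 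On $\mathcal{E}_n$, conditioning on $L, R, (K_i)_i, (Y_i)_i$, the remaining randomness in $\pr_j(X_{i^*})$ makes $\pr_j(Z_n)$ uniform on $LT_{j,k^*}R$, so
\begin{equation*}
\bigl\lVert h * \mathrm{Unif}(L T_{j,k^*} R) - \mathrm{Unif}(L T_{j,k^*} R)\bigr\rVert = \frac{|(L^{-1}hL) F_{j,k^*} \triangle F_{j,k^*}|}{|F_{j,k^*}|}.
\end{equation*}

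The main technical point is estimating this conjugate. Because $i^*$ is the smallest argmax, every $\pr_j(X_i)$ with $i < i^*$ satisfies $K_i < k^*$ and therefore lies in $T_{j,K_i} \cup \{c_{j,K_i}\} \subset A_{j,K_i+1} \subset A_{j,k^*}$; thus $L \in A_{j,k^*}^{i^*-1}$, and $L^{-1} h L$ is a product of at most $2i^*-1 \leq 2n$ elements from $A_{j,k^*} \cup A_{j,k^*}^{-1}$. Combining the identity $|aF \triangle F| = |a^{-1}F \triangle F|$ with the triangle inequality for symmetric differences under left multiplication bounds the displayed ratio by $O(n/k^*) \leq O(n/M_n) \to 0$. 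Writing $\pr_j(\nu)^{*n}$ as the average of the conditional distributions and applying this bound on $\mathcal{E}_n$, together with the trivial estimate $\leq 2$ on the bad event, yields $\lVert h * \pr_j(\nu)^{*n} - \pr_j(\nu)^{*n}\rVert \leq O(n/M_n) + 2\Pr(\mathcal{E}_n^c) \to 0$, as desired. This is precisely why $F_{j,k}$ is taken $(A_{j,k}^{k+1}, 1/k)$-invariant rather than merely $(A_{j,k}, 1/k)$-invariant: the exponent $k+1 \gg 2n$ leaves abundant margin to absorb the factors appearing in $L^{-1}hL$.
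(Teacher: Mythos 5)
Your proposal is correct and follows essentially the same route as the paper: condition on the event that the (first) maximal value $k^*$ of $K_1,\dots,K_n$ is large and its step is ``blue'', observe that this step contributes a uniform factor on $F_{j,k^*}b'_{j,k^*}S_{j,k^*}b''_{j,k^*}$ while the right factor cancels in total variation and the left factor lies in a power of $A_{j,k^*}$, and invoke the approximate invariance of $F_{j,k^*}$. The only (harmless) divergence is at the final estimate: you conjugate and telescope over the $O(n)$ letters of $L^{-1}hL$, which forces you to demand $k^*\geq n^2$, whereas the paper bounds $\lvert hq'F\triangle q'F\rvert\leq\lvert hq'F\triangle F\rvert+\lvert q'F\triangle F\rvert$ directly using $hq',q'\in A_{j,k^*}^{k^*+1}$ (this is where the exponent $k^*+1$ is actually used), so that the record only needs to exceed $n$.
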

\begin{proof}
Without loss of generality we will prove the lemma for $\pr_1$.
Fix an element $g \in G_1$ and $\varepsilon > 0$ Let $n_0$ be such that $g \in A_{1,n_0}$ and $n_0 > 1/\varepsilon$.
Consider the i.i.d. process of triples $(K_i, Y_i, X_i)$.
By Lemma \ref{lem: stabilization}, with probability bigger than $1-\varepsilon$ we have that for big enough $n > n_0$, the maximal value of $k_m$ for $m = 1, \ldots, n$ is unique on that segment, $k_m > n$ and the corresponding $y_m$ is ``blue''. 
Now condition our process to the latter requirements, and also fixing the values of $X_i$ for $i \neq m$, we get that the conditional distribution of
$$\pr_1(X_1) \pr_1(X_2) \ldots  \pr(X_{m-1})\pr(X_m)\pr(X_{m+1}) \ldots \pr_1(X_n)$$

is $\kappa = \pr_1(x_1) \pr_2(x_2) \ldots \pr_1(x_{m-1})\lambda_{F_{1,m}} b'_{1,m} \lambda_{S_{1,m}} b''_{1,m} \pr_1(x_{m+1} \ldots \pr_1{x_n})$. 
Now, by assumption, we get that $\pr_1(x_i) \in A_{m}$ for $i = 1, \ldots m-1, m+1 \ldots, n$. So $\kappa = q' \lambda_{F_{1,m}} b'_{1,m} \lambda_{S_{1,m}} b''_{1,m}q''$, where $q' \in A_{1,m}^{n-1}$. This implies that $\lVert g * \kappa  - \kappa \lVert < 2/m < 2\varepsilon$ (since $F_{1,m}$ is by construction $(A^m_{2,m},1/m)$-invariant). The latter implies  that $\lVert g * \pr_1(\nu^{*n})-\pr_1(\nu^{*n})\rVert < 4 \varepsilon$. 
\end{proof}

Now we will prove that $G_1 \times G_2$ acts freely on the Poisson-Furstenberg boundary $\partial(G_1 \times G_2, \nu)$.



Let us define subsets of $G_1 \times G_2$:
$$W_i = \lbrace (q'_1 f_1 b'_{1,i} \psi_{1,i}(f_2)b''_{1,i} q''_1,\quad q'_2 f_2 b'_{2,i} \psi_{2,i}(f_1)b''_{2,i} q''_2)\rbrace,$$
where $q'_j \in A^{i+1}_{j,i}$, $f_j \in F_{j,i}$, $q''_j \in A^i_{j,i}$,
and 
$$W'_i = \lbrace (q'_1 f_1 b'_{1,i} \psi_{1,i}(f_2)b''_{1,i} q''_1,\quad q'_2 f_2 b'_{2,i} \psi_{2,i}(f_1)b''_{2,i} q''_2)\rbrace,$$
where $q'_j \in A^{i}_{j,i}$, $f_j \in F_{j,i}$, $q''_j \in A^i_{j,i}$.

\begin{lem}\label{lem: simple pair sets}
Sets $W_i$ are pairwise disjoint, and $W'_i \subset W_i$ for all $i \in \N$.
\end{lem}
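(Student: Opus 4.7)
The plan is to split the lemma into two independent parts. The inclusion $W'_i \subset W_i$ is essentially formal: the only difference between the two definitions is that $W'_i$ requires $q'_j \in A_{j,i}^i$ whereas $W_i$ allows $q'_j \in A_{j,i}^{i+1}$. Since $1_{G_j} \in A_{j,i}$ (by induction, starting from $A_{j,1} = \{1_{G_j}\}$ and noting that the recursion only enlarges $A_{j,\cdot}$), we have $A_{j,i}^i \subset A_{j,i}^{i+1}$, so the $W'_i$-constraints immediately imply the $W_i$-constraints.

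The substance of the lemma is the pairwise disjointness. I would fix $i < j$ and argue by contradiction, supposing $(g_1, g_2) \in W_i \cap W_j$; it suffices to derive a contradiction from the first coordinate alone. The mechanism is the switcher property of $b''_{1,j}$: setting $B := A_{1,j} \cup S_{1,j} \cup F_{1,j} \cup \{b'_{1,j}\}$, the construction gives $B^{2j+8} \cap B^{2j+8} \cdot b''_{1,j} \cdot B^{2j+8} = \varnothing$. From the $W_j$-representation of $g_1$, parsing the factors on either side of $b''_{1,j}$ shows that $g_1 \in B^{j+4} \cdot b''_{1,j} \cdot B^j$, hence $g_1 \in B^{2j+8} \cdot b''_{1,j} \cdot B^{2j+8}$. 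From the $W_i$-representation, the crucial observation is that the middle chunk $f_1 b'_{1,i} \psi_{1,i}(f_2) b''_{1,i}$ is a single element of $F_{1,i} b'_{1,i} S_{1,i} b''_{1,i} \subset A_{1,i+1}$ by the very definition of $A_{j,i+1}$; absorbing it gives $g_1 \in A_{1,i+1}^{2i+2} \subset A_{1,j}^{2i+2} \subset B^{2j+8}$ (using $i+1 \leq j$ and $2i+2 \leq 2j+8$). This places $g_1$ simultaneously in $B^{2j+8}$ and in $B^{2j+8} \cdot b''_{1,j} \cdot B^{2j+8}$, contradicting the switcher axiom.

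I do not anticipate any real obstacle: the exponents $i+2$ and $2i+8$ chosen in the construction for $b'_{j,i}$ and $b''_{j,i}$ were picked with generous slack, so the exponent bookkeeping in the comparison between level-$i$ and level-$j$ representations just goes through. The only conceptual point to keep in mind — and the reason the switcher is defined at a power of the alphabet rather than the alphabet itself — is that on the $W_i$-side the old switchers $b'_{1,i}$ and $b''_{1,i}$ are not visible individually to the level-$j$ alphabet; they are hidden inside the compound elements of $A_{1,i+1} \subset A_{1,j}$ by the very recipe used to build $A_{j,i+1}$.
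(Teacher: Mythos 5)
Your proposal is correct and follows essentially the same route as the paper's own proof: the inclusion is formal because $1_{G_j}\in A_{j,i}$, and disjointness is obtained by projecting to the first coordinate, bounding $\pr_1(W_i)\subset A_{1,i+1}^{2i+2}\subset B^{2j+8}$ and $\pr_1(W_j)\subset B^{j+4}b''_{1,j}B^{j}$ with $B=A_{1,j}\cup S_{1,j}\cup F_{1,j}\cup\lbrace b'_{1,j}\rbrace$, and invoking the $B^{2j+8}$-switcher property of $b''_{1,j}$. The exponent bookkeeping in your write-up matches the paper's.
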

\begin{proof}
The second claim is obvious from the fact that $A_{j,i}$ contains the group identity.

Consider $l < r$. We note that $\pr_1(W_l)$ is a subset of $A_{1,l+1}^{2l+2} \subset A_{1,r}^{2r+2}$. Also, $\pr_1(W_r)$ is a subset of 
\begin{multline*}
A^{r+1}_{1,r}F_{j,r}b'_{1,r}S_{1,r}b''_{1,r}A^r_r \\\subset (A_{1,r} \cup S_{1,r} \cup F_{1,r} \cup \lbrace b'_{1,r}\rbrace)^{r+4}b''_{1,r}(A_{1,r} \cup S_{1,r} \cup F_{1,r} \cup \lbrace b'_{1,r}\rbrace)^r.
\end{multline*}
Now, by construction, $b''_{1,r}$ is an
$(A_{1,r} \cup S_{1,r} \cup F_{1,r} \cup \lbrace b'_{1,r}\rbrace)^{2r+8}$-switcher, so sets 
$$(A_{1,r} \cup S_{1,r} \cup F_{1,r} \cup \lbrace b'_{1,r}\rbrace)^{2r+8} b''_{1,r} (A_{1,r} \cup S_{1,r} \cup F_{1,r} \cup \lbrace b'_{1,r}\rbrace)^{2r+8}$$
and
$$(A_{1,r} \cup S_{1,r} \cup F_{1,r} \cup \lbrace b'_{1,r}\rbrace)^{2r+8}$$ 
are disjoint. Hence sets $\pr_1(W_l)$ and $\pr_1(W_r)$ are disjoint (reminder, $1_{G_1} \in A_{1,r}$), and so $W_l$ and $W_r$ are disjoint.
\end{proof}

\begin{lem}\label{lem: simple pair decomposition}
If some $g \in G_1 \times G_2$ could be presented in a form
$$g = (q'_1 f_1 b'_{1,i} \psi_{1,i}(f_2)b''_{1,i} q''_1,\quad q'_2 f_2 b'_{2,i} \psi_{2,i}(f_1)b''_{2,i} q''_2),$$
where $q'_j \in A^{i+1}_{j,i}$, $f_j \in F_{j,i}$, $q''_j \in A^i_{j,i}$, for $j = 1,2,$
then this decomposition is unique.
\end{lem}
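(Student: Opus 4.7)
The plan is to exploit the two switcher properties---of $b''_{1,i}$ and $b'_{1,i}$---applied to the first coordinate of $g$, peeling off one layer at a time, and then to invoke the same argument on the second coordinate in order to kill a residual ambiguity. Suppose a second such decomposition is given, with tildes on every piece.

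Fix $j=1$ and abbreviate $A^{*} := A_{1,i} \cup S_{1,i} \cup F_{1,i} \cup \lbrace b'_{1,i}\rbrace$. The prefix $q'_1 f_1 b'_{1,i} \psi_{1,i}(f_2)$ of the first coordinate lies in $(A^{*})^{i+4}$, while the suffix $q''_1$ lies in $A_{1,i}^{i} \subset (A^{*})^{i}$; both are therefore contained in $(A^{*})^{2i+8}$. Since $b''_{1,i}$ was chosen to be an $(A^{*})^{2i+8}$-switcher, equality of the two presentations of the first coordinate forces
$$q''_1 = \tilde q''_1, \qquad q'_1 f_1 b'_{1,i} \psi_{1,i}(f_2) = \tilde q'_1 \tilde f_1 b'_{1,i} \psi_{1,i}(\tilde f_2).$$

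Next I apply the switcher property of $b'_{1,i}$, which is guaranteed on $(A_{1,i} \cup S_{1,i} \cup F_{1,i})^{i+2}$. The factor $q'_1 f_1 \in A_{1,i}^{i+1} F_{1,i}$ lies in $(A_{1,i} \cup F_{1,i})^{i+2}$, and $\psi_{1,i}(f_2) \in S_{1,i}$, so the switcher property yields $q'_1 f_1 = \tilde q'_1 \tilde f_1$ and $\psi_{1,i}(f_2) = \psi_{1,i}(\tilde f_2)$. Bijectivity of $\psi_{1,i}$ then gives $f_2 = \tilde f_2$.

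The subtle point is that the equality $q'_1 f_1 = \tilde q'_1 \tilde f_1$ does not split on its own, since a long word in $A_{1,i}$ can absorb the trailing letter from $F_{1,i}$; this is precisely why the second coordinate is needed. Running the same two-step argument on the second coordinate of $g$ yields $q''_2 = \tilde q''_2$, $\psi_{2,i}(f_1) = \psi_{2,i}(\tilde f_1)$, and $q'_2 f_2 = \tilde q'_2 \tilde f_2$; bijectivity of $\psi_{2,i}$ gives $f_1 = \tilde f_1$. Now cancellation in $q'_1 f_1 = \tilde q'_1 \tilde f_1$ yields $q'_1 = \tilde q'_1$, and symmetrically $f_2 = \tilde f_2$ forces $q'_2 = \tilde q'_2$. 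Thus all six components $q'_j, f_j, q''_j$ are determined by $g$, which establishes uniqueness.
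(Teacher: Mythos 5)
Your proof is correct and follows essentially the same route as the paper's: peel off $q''_j$ with the switcher property of $b''_{j,i}$, then split off $\psi_{j,i}(f_{2-j})$ with the switcher property of $b'_{j,i}$, recover $f_1, f_2$ from the bijectivity of the $\psi$'s across the two coordinates, and finish by cancellation. Your explicit verification of the containments needed to invoke the switcher properties, and your remark on why the cross-coordinate information is needed to split $q'_1 f_1 = \tilde q'_1 \tilde f_1$, are welcome clarifications of steps the paper leaves implicit.
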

\begin{proof}
Assume that there is alternative decomposition:
$$g = (\bar{q}'_1 \bar{f}_1 \bar{b}'_{1,i} \psi_{1,i}(\bar{f}_2)\bar{b}''_{1,i} \bar{q}''_1,\quad \bar{q}'_2 \bar{f}_2 \bar{b}'_{2,i} \psi_{2,i}(\bar{f}_1)\bar{b}''_{2,i} \bar{q}''_2),$$
where $\bar{q}'_j \in A^{i+1}_{j,i}$, $\bar{f}_j \in F_{j,i}$, $\bar{q}''_j \in A^i_{j,i}$,
Since $b''_{j,i}$ is a switcher by construction, we get that $q''_j = \bar{q}''_j$ and $q'_j f_j b'_j \psi_{j,i}(f_{2-j}) = \bar{q}'_j \bar{f}_j \bar{b}'_j \psi_{j,i}(\bar{f}_{2-j})$, for $j = 1,2$. Now, using the fact that $b'_{j,i}$ is a switcher, we get that $\psi_{j,i}(f_{2-j}) = \psi_{j,i}(\bar{f}_{2-j})$ and $q'_j f_j  = \bar{q}'_j \bar{f}_j$, for $j=1,2$. 
We derive $f_{2-j} = \bar{f}_{2-j}$, i.e. $f_{j} = \bar{f}_{j}$, since $\psi_{j,i}$ are bijections from $F_{2-j_i}$ to $S_{j,i}$, for $j = 1,2$.
We conclude that $q'_j = \bar{q}'_j$, for $j =1,2$;
\end{proof}

Let us construct the map $p$ that will satisfy the requirements of Lemma \ref{lem: main non-triviality}. For $g \in G_1 \times G_2$ that could be presented as 
$$g = (q'_1 f_1 b'_{1,i} \psi_{1,i}(f_2)b''_{1,i} q''_1,\quad q'_2 f_2 b'_{2,i} \psi_{2,i}(f_1)b''_{2,i} q''_2),$$
where $q'_j \in A^{i+1}_{j,i}$, $f_j \in F_{j,i}$, $q''_j \in A^i_{j,i}$, for $j = 1,2$, we set 
$$p(g) = (q'_1 ,\quad q'_2),$$
It follows from Lemmata \ref{lem: simple pair sets} and \ref{lem: simple pair decomposition} that $p$ is a well-defined map on $\bigcup_i W_i$.

Remind that for $g \in G$ we define $\rank(g)$ to be the unique $i \in \N$ such that $ i \in W_i$ or $0$ if $g \notin \bigcup_i W_i$.

\begin{lem}\label{lem: simple pair p-comb}
\begin{enumerate}
\item $\rank(p(g)) < \rank(g)$ if $p(g)$ is defined;
\item $hw \in W_i$ for all $h \in A_{1,i} \times A_{2,i}$ and $w \in W'_i$;
\item $p(hw) = hp(w)$ for all $h \in A_{1,i} \times A_{2,i}$ and $w \in W'_i$.
\end{enumerate} 
\end{lem}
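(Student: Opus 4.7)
The three claims are of increasing difficulty. Claims 2 and 3 should fall out of the algebraic shape of $W'_i$ and the uniqueness statement in Lemma~\ref{lem: simple pair decomposition}, while claim 1 will require invoking the switcher property of $b''_{1,j}$.

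For claim 2, I would write an arbitrary $w \in W'_i$ as
$$w = (q'_1 f_1 b'_{1,i} \psi_{1,i}(f_2)b''_{1,i} q''_1,\; q'_2 f_2 b'_{2,i} \psi_{2,i}(f_1)b''_{2,i} q''_2),$$
with $q'_j \in A_{j,i}^{i}$, $f_j \in F_{j,i}$, $q''_j \in A_{j,i}^i$, and multiply on the left by $h = (h_1,h_2) \in A_{1,i}\times A_{2,i}$: the prefix $q'_j$ is replaced by $\tilde q'_j := h_j q'_j \in A_{j,i}^{i+1}$, while every other factor is left unchanged. This exhibits $hw$ in the defining form of $W_i$, proving claim 2. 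For claim 3, the decomposition just produced is unique by Lemma~\ref{lem: simple pair decomposition}, and so by definition of $p$ one reads off $p(hw) = (\tilde q'_1, \tilde q'_2) = h \cdot p(w)$.

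For claim 1, suppose $g \in W_i$ with $p(g) = (q'_1, q'_2)$, where $q'_j \in A_{j,i}^{i+1}$. I need to rule out $p(g) \in W_j$ for every $j \geq i$, and by projecting it is enough to show $q'_1 \notin \pr_1(W_j)$. Set $U_j := A_{1,j} \cup F_{1,j} \cup S_{1,j} \cup \{b'_{1,j}\}$. Reading off the definition of $W_j$ yields
$$\pr_1(W_j) \subset U_j^{j+4}\, b''_{1,j}\, U_j^{j} \subset U_j^{2j+8}\, b''_{1,j}\, U_j^{2j+8},$$
while $A_{1,i} \subset A_{1,j} \subset U_j$ and the trivial bound $i+1 \leq 2j+8$ give $q'_1 \in A_{1,i}^{i+1} \subset U_j^{2j+8}$. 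By construction $b''_{1,j}$ is a $U_j^{2j+8}$-switcher, so $U_j^{2j+8} \cap U_j^{2j+8}\, b''_{1,j}\, U_j^{2j+8} = \varnothing$ and hence $q'_1 \notin \pr_1(W_j)$, as required.

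The only subtle step is the exponent bookkeeping at the end: the construction deliberately demanded $b''_{j,i}$ to be a switcher with the inflated power $2i+8$ precisely so that a \emph{single} switcher separates $p(g)$ from every $W_j$ with $j \geq i$ at once, and the calculation above is where that margin is spent.
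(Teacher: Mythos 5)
Your proof is correct and follows essentially the same route as the paper: claims (2) and (3) are obtained by left-multiplying the canonical decomposition of $w\in W'_i$ and invoking the uniqueness Lemma~\ref{lem: simple pair decomposition}, and claim (1) by showing the prefix $q'_1\in A_{1,i}^{i+1}$ cannot lie in $\pr_1(W_j)$ for any $j\geq i$ because a switcher separates $B$ from $B b B$. The one (harmless, in fact favourable) divergence is in (1): the paper appeals to the switcher property of $b'_{1,r}$ and writes $\pr_1(W_r)$ without its trailing factor $A_{1,r}^{r}$, whereas you use the $(A_{1,j}\cup S_{1,j}\cup F_{1,j}\cup\lbrace b'_{1,j}\rbrace)^{2j+8}$-switcher $b''_{1,j}$, which cleanly absorbs the full expression $A_{1,j}^{j+1}F_{1,j}b'_{1,j}S_{1,j}b''_{1,j}A_{1,j}^{j}$ and spends exactly the exponent margin the construction was designed to provide.
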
 
\begin{proof}
For (1) assume $g \in W_i$, take $r \geq i$. We have $\pr_1(p(g)) \in A_{1,i}^{i+1} \subset A^{r+1}_{1,r}$ and $\pr(W_r) = A^{r+1}_{1,r} F_{1,r} b'_{1,r} S_{1,r} b''_{1,r}$. We conclude that $\pr_1(p(g)) \notin \pr(W_r)$ since by construction $b'_{1,r}$ is a switcher. Hence $p(g) \notin W_r$ and so $\rank(p(g)) \neq r$.

For (2) we simply note that $W_i = (A_{1,i} \times A_{2,i}) W'_i$.

For (3) we observe that 
$$w = (q'_1 f_1 b'_{1,i} \psi_{1,i}(f_2)b''_{1,i} q''_1,\quad q'_2 f_2 b'_{2,i} \psi_{2,i}(f_1)b''_{2,i} q''_2),$$
where $q'_j \in A^{i}_{j,i}$, $f_j \in F_{j,i}$, $q''_j \in A^i_{j,i}$,
so $p(w) = (q'_1, q'_2)$. 
Let $h = (h_1, h_2) \in A_{1,i} \times A_{2,i}$.
Now 
$$hw = (\bar{q}'_1 f_1 b'_{1,i} \psi_{1,i}(f_2)b''_{1,i} q''_1,\quad \bar{q}'_2 f_2 b'_{2,i} \psi_{2,i}(f_1)b''_{2,i} q''_2),$$
where $\bar{q}'_{j,i} = h_j q'_{j,i} \in A^{i+1}_{j,i}$. By Lemma \ref{lem: simple pair decomposition} we conclude that 
$$p(hw) = (h_1 q'_{1,i}, h_2 q'_{2,i}) = h\, p(w).$$ 

Remind that to the i.i.d. process $(X_i)$ we associate the random-walk process $Z_i = X_1 \cdot \ldots \cdot X_i$.
\end{proof}

\begin{lem}\label{lem: simple pair p-prob}
For almost every realization $(k_n, y_n, x_n)$ of the triple process $(K_n, Y_n, X_n)$ (hence, for almost every realization of the process $(X_n)$) there is $N$ such that for all $n > N$
\begin{enumerate}
\item $z_n \in \bigcup_{r}W'_r$;
\item either $p(z_{n+1}) = p(z_{n})$ or $p(z_{n+1}) = z_n$,
\end{enumerate}
where $z_n = z_1 \cdot \ldots \cdot z_n$, for $j \in \N$.
\end{lem}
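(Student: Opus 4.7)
The plan is to combine the stabilization Lemma~\ref{lem: stabilization} with the explicit form of $x_m$ at blue record-times. Fix a realization on which the stabilization conclusion holds; this is a set of full measure, and for every $n \geq N$ the maximum of $k_1, \ldots, k_n$ is attained at a unique position $m = m(n)$, which is a simple record, satisfies $k_m > n$, and has $y_m = \text{``blue''}$. Throughout, write $m = m(n)$.

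\textbf{Claim (1): $z_n \in W'_{k_m}$.} I decompose $z_n = (x_1\cdots x_{m-1}) \cdot x_m \cdot (x_{m+1}\cdots x_n)$. Inspection of the definition of $X_i$ shows that $\pr_j(x_i) \in A_{j, k_i + 1}$ in both the red and the blue case, for $j = 1, 2$. By uniqueness of the maximum, $k_i < k_m$ for every $i \leq n$ with $i \neq m$, so $\pr_j(x_i) \in A_{j, k_m}$ and hence
\[
\pr_j(x_1 \cdots x_{m-1}) \in A^{m-1}_{j, k_m}, \qquad \pr_j(x_{m+1} \cdots x_n) \in A^{n-m}_{j, k_m}.
\]
Both exponents are bounded by $k_m$ because $k_m > n$. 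Since $y_m = $ blue, the middle factor has the required form $(f_1 b'_{1, k_m} \psi_{1, k_m}(f_2) b''_{1, k_m},\, f_2 b'_{2, k_m} \psi_{2, k_m}(f_1) b''_{2, k_m})$ with $f_j \in F_{j, k_m}$. Together these checks place $z_n$ in $W'_{k_m}$.

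\textbf{Claim (2): $p(z_{n+1}) \in \{p(z_n), z_n\}$.} Apply Step~1 at time $n+1$ as well; uniqueness of the maximum at time $n+1$ rules out $k_{n+1} = k_m$. If $k_{n+1} < k_m$, the maximum at time $n+1$ is still at position $m$, so $z_{n+1}$ admits the $W'_{k_m}$-decomposition with the same prefix $x_1 \cdots x_{m-1}$ as $z_n$ (the suffix $x_{m+1} \cdots x_{n+1}$ now has length $n+1-m \leq k_m$ thanks to the clause $k_m > n+1$ of stabilization at time $n+1$). The uniqueness Lemma~\ref{lem: simple pair decomposition} then gives $p(z_{n+1}) = p(z_n)$. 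If instead $k_{n+1} > k_m$, then $n+1$ itself is the new strict record, with $y_{n+1} = $ blue and $k_{n+1} > n+1$, and $z_{n+1} = z_n \cdot x_{n+1}$ lies in $W'_{k_{n+1}}$ with prefix $q' = z_n$ (which belongs to $A^{n}_{1, k_{n+1}} \times A^{n}_{2, k_{n+1}}$ by the same estimate as in Step~1) and trivial suffix; hence $p(z_{n+1}) = z_n$.

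\textbf{Expected difficulty.} No conceptual obstacle is anticipated; the only delicate point is the bookkeeping of the exponents in the definitions of $W'_r$: one needs $m-1, n-m \leq k_m$ in Step~1 and $n+1-m \leq k_m$, $n \leq k_{n+1}$ in Step~2. All of these are delivered precisely by the clause $k_{m(n)} > n$ in the definition of stabilization, which is exactly why that clause is built into the definition.
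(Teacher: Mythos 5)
Your proposal is correct and follows essentially the same route as the paper: invoke the stabilization lemma to locate the unique blue record position $m$ with $k_m>n$, decompose $z_n$ around $x_m$ to verify membership in $W'_{k_m}$, and then split into the cases $k_{n+1}<k_m$ (same prefix, so $p(z_{n+1})=p(z_n)$ via the uniqueness lemma) and $k_{n+1}>k_m$ (new record, so $p(z_{n+1})=z_n$). Your bookkeeping of the exponent bounds coming from $k_{m}>n$ is exactly the point the paper relies on, just spelled out more explicitly.
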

\begin{proof}
We take $N$ to be the stabilization time for the sequence $(k_n, y_n)$ (see Lemma \ref{lem: stabilization}. By definition, for every $n > N$, the maximal value $i = k_m$ among $k_1, \ldots , k_n$ is unique and $y_m = \text{``blue''}$, and $k_m > n$. It is easy to see that by construction we get that $z_n = x_1 \cdot \ldots \cdot x_m \cdot \ldots \cdot x_n$, so it could ve represented as:
$$z_n = (q'_1 f_1 b'_{1,i} \psi_{1,i}(f_2)b''_{1,i} q''_1,\quad q'_2 f_2 b'_{2,i} \psi_{2,i}(f_1)b''_{2,i} q''_2),$$ 
where $(q'_1, q'_2) = x_1 \cdot \ldots \cdot x_{m-1} \in (A_{1,i} \times A_{2,i})^{m-1}$, $f_j \in F_{j,i}$ for $j = 1,2$, and $(q''_1, q'_2) = x_{m+1} \cdot \ldots \cdot x_{n} \in (A_{1,i} \times A_{2,i})^{n-m}$. It is easy to see that $z_n$ belongs to  $W'_i$, hence (1) is proved.

Carry on with the same notation, consider $z_{n+1}$. We either have that $k_{n+1} < k_m = i$ (the record is no beaten) and we, can see that $z_{n+1}$ decomposes as 
$$z_n = (q'_1 f_1 b'_{1,i} \psi_{1,i}(f_2)b''_{1,i} \bar{q}''_1,\quad q'_2 f_2 b'_{2,i} \psi_{2,i}(f_1)b''_{2,i} \bar{q}''_2),$$
where $(\bar{q}''_1, \bar{q}''_2) = (q''_1, q''_2) \cdot (x_{n+1}, x_{n+1})$. In this case we (a posteriori) get that $n+1 < i$ (using the fact that the trajectory stabilized), and it is easy to check, applying Lemma \ref{lem: simple pair decomposition}, that $p(z_{n+1}) = (q'_1, q'_2) = p(z_n)$.

Possibility $k_{n+1} = k_m$ is excluded since $n$ is bigger than the stabilization time.

Now assume that $r = k_{n+1} > k_m = i$. Then $y_{n+1} = \text{``blue''}$ and $k_{n+1} > n+1$. Also note that $z_1, \ldots , z_n \in (A_{1,r} \times A_{2,r})$.
Now it is easy to check that $p(z_{n+1}) = z_n$.
\end{proof}

\begin{proof}[Proof of Theorem \ref{thm: simple pair} without symmetric requirement.]
We already proved that measure boundaries $\partial(G_j, \pr_j(\nu))$, $j = 1,2$ are trivial in Lemma \ref{lem: simple pair trivial}.
Lemmata \ref{lem: simple pair p-comb} and \ref{lem: simple pair p-prob} verify requirements of Lemma \ref{lem: main non-triviality}, so the Poisson-Furstenberg boundary $\partial(G_1 \times G_2, \nu)$ is non-trivial, and moreover $G_1 \times G_2$ acts essentially freely on it.
\end{proof}

\section{Example of symmetrical measure on the product of two groups}\label{sec: symmetric}
In this section we finish the proof of Theorem \ref{thm: simple pair} by providing an example of a symmetric measure of full support on the product $\Gamma = G_1 \times G_2$ of two amenable ICC groups such that the action of $\Gamma$ on the Poisson-Furstenberg boundary is non-trivial and the $G_j$-marginals have trivial boundaries. We will need a more versatile notion of a super-switching element to this end which was introduced initially in \cite{FHTF19}. 

We introduce a notation $A^{\pm} = A \cup A^{-1}$ for a subset $A$ of a group.

A super-switcher for a subset $A$ of a group is an element $b$ such that $A \cap AbA = \varnothing$, $A \cap Ab^{-1}A = \varnothing$ and
$a'b^{\sigma}a'' = \bar{a}'b^{\bar{\sigma}}\bar{a}''$ with $a', a'', \bar{a}', \bar{a}'' \in A$ and $\sigma, \bar{\sigma} = \pm1$ implies that $a' = \bar{a}'$,  $a'' = \bar{a}''$ and $b^{\sigma} = b^{\bar{\sigma}}$(naturally, this does not rule out the possibility $b = b^{-1}$).

The following lemma is proved in \cite[Proposition 2.5]{FHTF19} for amenable groups and in \cite[Proposition 4.19]{ErKa19} for arbitrary group:
\begin{lem}
There is a superswitcher for every finite subset $A$ of an ICC group $G$.
\end{lem}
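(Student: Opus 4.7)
The plan is to find $b \in G$ by forbidding it from a finite union of ``small'' subsets of $G$, each either finite or contained in a coset of a subgroup of infinite index, and then invoking B.~H.~Neumann's lemma, which says that a group cannot be written as a finite union of cosets of subgroups of infinite index. This is the strategy used in \cite[Proposition 2.5]{FHTF19} and \cite[Proposition 4.19]{ErKa19}.

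First the three conditions are translated into explicit restrictions on $b$. The disjointness requirements $A \cap AbA = \varnothing$ and $A \cap Ab^{-1}A = \varnothing$ amount to $b \notin A^{-1}AA^{-1} \cup AA^{-1}A$, a finite bad set. For condition~(3) with $\sigma = \bar\sigma = 1$, the equation $a'ba'' = \bar a' b \bar a''$ rearranges to $b^{-1}xb = y$ with $x = \bar a'^{-1}a' \in A^{-1}A$ and $y = \bar a'' a''^{-1} \in AA^{-1}$; to force the desired conclusion $a' = \bar a'$, $a'' = \bar a''$ any nontrivial pair $(x,y) \neq (e,e)$ must be excluded. For each such pair with $x \neq e$, the locus $\{b : b^{-1}xb = y\}$ is either empty or a coset of the centralizer $Z_G(x)$, which has infinite index since $x$ lies in an infinite conjugacy class by the ICC hypothesis. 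The case $\sigma = \bar\sigma = -1$ is handled symmetrically by replacing $b$ by $b^{-1}$.

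The principal obstacle is the mixed case $\sigma \neq \bar\sigma$. Here $a'ba'' = \bar a' b^{-1} \bar a''$ rearranges to $bxb = y^{-1}$ for some $(x,y) \in A^{-1}A \times AA^{-1}$, and the required conclusion $b^\sigma = b^{\bar\sigma}$ becomes $b = b^{-1}$. The first step of the argument is to restrict to $b$ with $b^2 \neq e$; this is possible because a non-trivial ICC group is non-abelian (an abelian group has singleton conjugacy classes) and therefore contains an entire infinite conjugacy class of elements of order greater than $2$. With $b^2 \neq e$ the mixed-case premise must fail outright for every $(x,y) \neq (e,e)$, and the substitution $c = bx$ identifies the bad locus $\{b : bxb = y^{-1}\}$ with a translate of $\{c : c^2 = y^{-1}x\}$. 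Any square root $c$ of an element $g$ automatically satisfies $cgc^{-1} = c \cdot c^2 \cdot c^{-1} = c^2 = g$, so all such $c$ lie in $Z_G(g)$; when $g = y^{-1}x \neq e$ this is an infinite-index subgroup and the bad locus sits inside one of its cosets. The residual diagonal sub-case $y = x$ (so $y^{-1}x = e$) asks that $bx$ not be an involution for each $x$ in the finite set $A^{-1}A \cap AA^{-1}$; this is the genuinely delicate point, where one must use the full strength of the ICC property to exhibit the translate of the set of involutions inside a finite union of cosets of subgroups of infinite index, exactly as done in the references cited above.

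Once every bad set has been expressed as either finite or a coset of a subgroup of infinite index, B.~H.~Neumann's lemma furnishes a $b \in G$ lying outside the entire finite union, which is the desired super-switcher for $A$. The main technical difficulty, and the step at which the ICC hypothesis is invoked most crucially, is the diagonal sub-case $y=x$ of the mixed-signs condition.
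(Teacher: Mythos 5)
First, note that the paper does not prove this lemma at all: it is quoted verbatim from \cite[Proposition 2.5]{FHTF19} (amenable case) and \cite[Proposition 4.19]{ErKa19} (general case), so there is no in-paper argument to compare against. Your reductions are correct and follow the standard strategy of those references: the disjointness conditions exclude a finite set; the equal-sign equations reduce to $b^{-1}xb=y$ and are killed by cosets of centralizers $Z_G(x)$, which have infinite index by ICC; and for the mixed-sign equations the substitution $c=bx$ correctly turns $bxb=z$ into $c^{2}=zx$, whose solutions lie in a coset of $Z_G(zx)$ whenever $zx\neq e$. All of that is fine.

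The genuine gap is exactly the sub-case you flag and then do not prove: when $zx=e$, the bad locus is a translate $I\cdot x^{-1}$ of the set $I=\{c: c^{2}=e\}$, and you must show that $b$ can be chosen outside $I\cdot E$ for a finite set $E$ (note that your earlier step ``restrict to $b^{2}\neq e$'' is just the instance $x=e$ of this same condition, so it cannot be dispatched separately by picking $b$ in a conjugacy class of non-involutions --- you would still need that class to avoid the remaining translates of $I$). Deferring this to the references is not a proof, and the specific claim you propose to import --- that in an arbitrary ICC group the set of involutions lies in a finite union of cosets of infinite-index subgroups --- is not an established fact you can quote; it is not obviously true (consider, e.g., the finitary symmetric group, where $I$ is a union of infinitely many infinite conjugacy classes), and it is not how the cited proofs proceed. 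In \cite{FHTF19} the amenability hypothesis is used at precisely this point (infinite-index cosets have zero density for an invariant mean, so $I$ would have positive density, which is then shown to be incompatible with ICC), and \cite{ErKa19} replaces this with a different structural argument to cover general ICC groups. Since this is the one step where the full strength of the ICC hypothesis enters beyond ``centralizers of nontrivial elements have infinite index,'' the proposal as written leaves the essential content of the lemma unestablished.
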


The essence of the construction and the proof remains the same as in the previous one but we need to tweak it in order to make the measure symmetric. We will highlight the differences.

Let  $(c_1,n, c_2,i)_{n\ in \N}$ be any sequence of pairs that enumerate all elements of $G_1 \times G_2$. We set $A_{1,1} = \lbrace 1_{G_1}\rbrace$ and $A_{2,1} = \lbrace 1_{G_2}\rbrace$.

For each $i \geq 1$ and $j \in \lbrace 1, 2\rbrace$:
\begin{enumerate}
\item let $F_{j,i}$ be a $(A_{j,i}^{i+1}, 1/i)$-invariant subset of $G_j$;
\item let $S_{j,i}$ be any subset of $G_j$ such that $\lvert S_{j,i}\rvert = \lvert F_{2-j,i} \rvert$;
\item let $b'_{j,i} \in G_j$ be an $(A_{j,i} \cup S^{\pm}_{j,i} \cup F^{\pm}_{j,i})^{i+2}$ - superswitcher;
\item let $b''_{j,i} \in G_j$ be an $(A_{j,i} \cup S^{\pm}_{j,i} \cup F^{\pm}_{j,i} \cup \lbrace b'_{j,i}\rbrace^{\pm})^{2i+8}$-superswitcher;
\item let $A_{j,i+1} = A_{j,i} \cup (F_{j,i}b'_{j,i} S_{j,i} b''_{j,i} F_{j,i})^{\pm} \cup \lbrace c_{j,i}\rbrace^{\pm}$.
\end{enumerate} 

Note that sets $A_{1,i} \times A_{2,i}$ form a growing sequence of symmetric sets whose union is $\Gamma = G_1 \times G_2$. 

We fix arbitrary bijections $\psi_{j,i} : F_{2-j,i} \to S_{j,i}$, for $j = 1,2$ and $i \in \N$.


We reuse the pair of variables $K, Y$ from the previous construction. Now we construct a $\Gamma = G_1 \times G_2$-valued random variable $X$ coupled with $K,Y$. We first flip a coin to get a $\sigma = \pm1$. Fix $K= k$.
If $Y = \text{``red''}$, then we set $X = (c^{\sigma}_{1,K}, c^{\sigma}_{2,k})$. Else, we first pick uniformly and independently $f_j \in F_{j,k}$ (so, two independent random choices), and then we set 
$$X = (f_1 b'_1 \psi_{1,k}(f_2)b''_1, \quad f_2 b_2 \psi_{2,k}(f_1) b''_2)^{\sigma}.$$
Let $\nu$ be the support of random variable $X$. 

It is easy to see that $\nu$ is symmetric. The following lemma has an almost identical proof to that of Lemma \ref{lem: simple pair trivial}.

\begin{lem}
The boundary $\partial(G_j, \pr_j(\nu))$ is trivial for $j = 1,2$.
\end{lem}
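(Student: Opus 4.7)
The plan is to mirror the proof of Lemma \ref{lem: simple pair trivial}, adapting it to the new symmetric step. Without loss of generality take $j = 1$. Fix $g \in G_1$ and $\varepsilon > 0$, and pick $n_0$ with $g \in A_{1, n_0}$ and $n_0 > 1/\varepsilon$; note that $A_{1, n_0}$ is now symmetric, which is the only change to this preliminary step.

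As in the non-symmetric case, for $n$ large the stabilization event---unique maximum of $(K_i)_{i \le n}$ attained at some position $m$, with $K_m > n$ and $Y_m = \text{blue}$---holds with probability $\geq 1 - \varepsilon$. Conditioning on this event and on the values of $X_i$ for $i \neq m$, the only remaining randomness in $\pr_1(Z_n)$ lies in the coin $\sigma_m$ and in $(f_1, f_2)$ at step $m$. The conditional law decomposes as a $\tfrac{1}{2}{+}\tfrac{1}{2}$ mixture
\[
\kappa \;=\; \tfrac{1}{2}\,q'_1 \,\eta_{K_m}\, q''_1 \;+\; \tfrac{1}{2}\,q'_1\, \eta_{K_m}^{*} \,q''_1 \;=:\; \tfrac{1}{2}\kappa_+ \,+\, \tfrac{1}{2}\kappa_-,
\]
where $\eta_k = \lambda_{F_{1,k}}\, b'_{1,k}\, \lambda_{S_{1,k}}\, b''_{1,k}$ is the forward-step measure, $\eta_k^{*}$ its reflection, and $q'_1 \in A_{1, K_m}^{m-1}$, $q''_1 \in A_{1, K_m}^{n-m}$.

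For the $\sigma_m = +1$ branch $\kappa_+$, the argument of Lemma \ref{lem: simple pair trivial} applies verbatim: left-translation by $g$ is absorbed by the Folner factor $\lambda_{F_{1, K_m}}$ sitting immediately to the right of $q'_1$, giving $\|g\kappa_+ - \kappa_+\| < 2/K_m$. The parallel observation for $\kappa_-$ is that the factor $\lambda_{F_{1,k}^{-1}}$ sits to the left of $q''_1$, so the analogous calculation gives the \emph{right}-invariance bound $\|\kappa_- g^{-1} - \kappa_-\| < 2/K_m$; here I would insist from the outset that each $F_{j,i}$ be chosen as a \emph{two-sided} $(A_{j,i}^{i+1}, 1/i)$-invariant set (always achievable in an amenable group), so that both $\lambda_{F_{1,k}}$ and $\lambda_{F_{1,k}^{-1}}$ are approximately invariant under conjugates of $g$ on their respective sides.

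To turn these one-sided conditional bounds into a bound on $\|g\pr_1(\nu^{*n}) - \pr_1(\nu^{*n})\|$ I would exploit the key new ingredient of the symmetric setting: since $\nu$ is symmetric, so is $\pr_1(\nu^{*n})$, and therefore $\|g\mu - \mu\| = \|\mu g^{-1} - \mu\|$ for $\mu = \pr_1(\nu^{*n})$. At the level of the average distributions, the time-reversal $(X_1,\dots,X_n) \mapsto (X_n^{-1},\dots,X_1^{-1})$ exchanges the events $E_\pm = \{\sigma_m = \pm 1\}$ while inverting $\pr_1(Z_n)$, so the unconditional laws $\widetilde{\kappa}_\pm$ (averaged over the $X_j$ with $j \neq m$) satisfy $\widetilde{\kappa}_- = \widetilde{\kappa}_+^{*}$. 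Combining these facts lets one convert the backward left-translation estimate into the forward right-translation estimate and vice versa, yielding $\|g \pr_1(\nu^{*n}) - \pr_1(\nu^{*n})\| < 4\varepsilon$ and hence triviality via Lemma \ref{lem: kv triviality}.

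The main obstacle I anticipate is precisely this last combining step. The forward/backward word supports are disjoint (by the superswitcher property of $b''_{1,k}$), so the decomposition $g\mu - \mu = \Pr(E_+)(g\kappa_+ - \kappa_+) + \Pr(E_-)(g\kappa_- - \kappa_-) + O(\Pr(E^c))$ leaves the two summands essentially non-cancelling. Na\"ively estimating each by its own direction---left for $\kappa_+$, right (after symmetry) for $\kappa_-$---does not close the loop, because $\|g\kappa_- - \kappa_-\|$ itself need not be small at the conditional level. Making the symmetry argument rigorous therefore seems to require either working with the averaged $\widetilde\kappa_\pm$ rather than the conditional $\kappa_\pm$, or conditioning on a richer event (for instance on both the first- and second-largest $K_i$ simultaneously, to ensure Folner structure available at both ends of the word); I expect the cleanest write-up uses the former.
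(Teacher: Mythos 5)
Your proposal correctly isolates the crux --- conditioning on the maximal record leaves a $\sigma_m=-1$ branch in which the F{\o}lner factor sits at the wrong end of the word --- but the combining step you sketch does not close, and your own closing paragraph essentially concedes this. Concretely: symmetry of $\mu=\pr_1(\nu^{*n})$ together with time reversal converts the \emph{left}-invariance estimate for $\widetilde{\kappa}_+$ into a \emph{right}-invariance estimate for $\widetilde{\kappa}_-=\widetilde{\kappa}_+^{*}$, i.e.\ it bounds $\lVert \widetilde{\kappa}_- * g^{-1}-\widetilde{\kappa}_-\rVert$; what is needed is $\lVert g*\widetilde{\kappa}_--\widetilde{\kappa}_-\rVert$, and since the supports of the two branches are disjoint (by the superswitcher property of $b''_{1,k}$) no cancellation between them can rescue the bound. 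Rewriting $\lVert g*\mu^{*n}-\mu^{*n}\rVert$ as $\lVert \mu^{*n}*g^{-1}-\mu^{*n}\rVert$ merely swaps which branch is uncontrolled, so working with the averaged $\widetilde{\kappa}_\pm$ (your preferred repair) does not help, and conditioning on the two largest $K_i$ does not remove the left/right mismatch either. The two-sided F{\o}lner modification is likewise beside the point: it gives right-invariance of $\kappa_-$, which is not what the Liouville criterion (Lemma \ref{lem: kv triviality}) asks for.

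The paper's proof sidesteps the issue by changing the conditioning event rather than trying to combine the two branches: it takes $m$ to be the \emph{first} record-time exceeding both the stabilization time and $M$ \emph{for which $\sigma_m=+1$}. Such an $m$ exists almost surely, because records occur infinitely often along almost every trajectory and the signs $\sigma_i$ are i.i.d.\ fair coins independent of everything else; choosing $N$ so that $m\le N$ with probability at least $1-\varepsilon$, one obtains for $n>N$ a decomposition $\mu^{*n}=\eta+\sum_{q,m,q'}\alpha_{q,m,q'}\, q*\lambda_{F_{1,k_m}}*q'$ with $\lVert\eta\rVert<\varepsilon$, $q$ a product of at most $m-1$ elements of $A_{1,k_m}$ (so $gq$ lies in $A_{1,k_m}^{k_m+1}$ since stabilization forces $k_m>m$), and $q'$ completely arbitrary, because right convolution is a total-variation isometry and everything to the right of the F{\o}lner block is irrelevant. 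On this event the F{\o}lner set always sits at the left end of the $m$-th increment and the argument of Lemma \ref{lem: simple pair trivial} applies verbatim; neither two-sided F{\o}lner sets nor the symmetry of $\mu$ is needed. This choice of conditioning is the idea missing from your proposal.
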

\begin{proof}
Let $\mu = \pr_1{\nu}$. We will prove that for every $g \in G_1$
$$\lim_{n \to \infty} \lVert g * \mu^{*n} - \mu\rVert = 0.$$
Consider the partial trajectory of the process i.i.d. $(k_n, y_n, \sigma_n)$ coupled with the i.i.d. process $(X_n)_{n \in \N}$. 
Let $M$ be such that $g \in A_M$ and $1/M < \varepsilon$.
Let us take $m$ to be the first record-time that is bigger than the stabilization time and $M$ and such that $\sigma_m = +1$. Note that such $m$ exists with probability $1$. This way we defined a random value $T = m$ coupled with our process.  Let $N$ be such that $T < N$ with probability $1 - \varepsilon$.
For every $n > N$ we have a decomposition:
$$\mu^{*n} =  \eta + \sum_{q,m,q'} \alpha_{q,m,q'} \cdot q * \lambda_{F_m} * q',$$
where $\lambda_{F_m}$ denotes the uniform measure on $F_m$, the weight of $\eta$ is smaller than $\varepsilon$, $m > n$ $q \in A^{m}_{1,m}$, $q' \in G_1$ and $\alpha_{q,m,q'} \geq 0$. It is easily follows from the approximate invariance of $\lambda$ that 
$$\lVert g * \mu^{*n} - \mu^{*n} \rVert < 8 \varepsilon$$.
\end{proof}

Let us define for $i \in \N$ sets
$$W_i  = \lbrace(\,q'_1 (f_1 b'_1 \psi_{1,i}(f_2) b''_1)^{\sigma} q''_1, \quad q'_2 (f_2 b'_2 \psi_{2,i}(f_1) b''_2)^{\sigma} q''_2\,)\rbrace,$$
where $\sigma = \pm1$, $q'_j \in A_{j,i}^{i+1}$, $f_j \in F_{j,i}$, $q'' \in A_{j,i}^i$  for $j = 1,2$, and 
$$W'_i  = \lbrace(\, q'_1 f_1 (b'_1 \psi_{1,i}(f_2) b''_1)^{\sigma} q''_1, \quad q'_2 (f_2 b'_2 \psi_{2,i}(f_1) b''_2)^{\sigma} q''_2 \,\rbrace),$$
where $\sigma = \pm1$, $q'_j \in A_{j,i}^{i}$, $f_j \in F_{j,i}$, $q'' \in A_{j,i}^i$, for $j = 1,2$.
We trivially have $W_i = (A_{1,i} \times A_{2,i}) W'_i$ and $W'_i \subset W_i$. Also, using the definition of superswitcher $b''_i$, it is no hard to see that sets $W_i$ are pairwise disjoint.

Now we will prove the analog of the unique decomposition Lemma \ref{lem: simple pair decomposition}:

\begin{lem}
If for an element $g \in \Gamma = G_1 \times G_2$ we have
$$g = (\,q'_1 (f_1 b'_1 \psi_{1,i}(f_2) b''_1)^{\sigma} q''_1, \quad q'_2 (f_2 b'_2 \psi_{2,i}(f_1) b''_2)^{\sigma} q''_2\,),$$
where $q'_j \in A_{j,i}^{i+1}$, $f'_j, f''_j \in F_{j,i}$, $q'' \in A_{j,i}^i$, for $j = 1,2$, then this decomposition is unique.
\end{lem}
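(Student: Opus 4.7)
The plan is to adapt the proof of Lemma \ref{lem: simple pair decomposition} by (i) replacing each switcher argument with the corresponding superswitcher argument, and (ii) inserting a preliminary step that rules out decompositions whose signs $\sigma$ differ. Suppose $g$ admits a second decomposition with data $\bar{\sigma}, \bar{q}'_j, \bar{f}_j, \bar{q}''_j$ subject to the same constraints. Focusing first on the coordinate $j = 1$ (the $j=2$ argument being identical), I would rewrite the equality as
$$\alpha \cdot (b''_{1,i})^{\sigma} \cdot \beta \;=\; \bar{\alpha} \cdot (b''_{1,i})^{\bar{\sigma}} \cdot \bar{\beta},$$
and verify that $\alpha, \beta, \bar{\alpha}, \bar{\beta}$ all lie in $B := (A_{1,i} \cup S_{1,i}^{\pm} \cup F_{1,i}^{\pm} \cup \{b'_{1,i}\}^{\pm})^{2i+8}$. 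Explicitly, for $\sigma = +1$ one has $\alpha = q'_1 f_1 b'_{1,i} \psi_{1,i}(f_2)$, a product of at most $i+4$ generators from that set, and $\beta = q''_1$, a product of at most $i$ generators; for $\sigma = -1$ the roles are reversed (with the appropriate inverses entering via $S_{1,i}^{\pm}$, $F_{1,i}^{\pm}$, $\{b'_{1,i}\}^{\pm}$), and both again lie in $B$. Applying the superswitcher property of $b''_{1,i}$ for $B$ yields $\alpha = \bar{\alpha}$, $\beta = \bar{\beta}$, and $(b''_{1,i})^{\sigma} = (b''_{1,i})^{\bar{\sigma}}$.

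The heart of the new argument is to deduce $\sigma = \bar{\sigma}$. Assume for contradiction $\sigma = +1$ and $\bar{\sigma} = -1$ (the reverse case being identical). The equation $\alpha = \bar{\alpha}$ then reads
$$q'_1 f_1 \cdot b'_{1,i} \cdot \psi_{1,i}(f_2) \;=\; \bar{q}'_1.$$
Setting $A' := (A_{1,i} \cup S_{1,i}^{\pm} \cup F_{1,i}^{\pm})^{i+2}$, the factors $q'_1 f_1 \in A_{1,i}^{i+1} \cdot F_{1,i}$ and $\psi_{1,i}(f_2) \in S_{1,i}$ both lie in $A'$, as does $\bar{q}'_1 \in A_{1,i}^{i+1}$. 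Hence $\bar{q}'_1 \in A' \cap A' b'_{1,i} A' = \varnothing$, where the emptiness comes from the superswitcher property of $b'_{1,i}$ for $A'$. This contradiction forces $\sigma = \bar{\sigma}$.

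With the signs equal, the argument reduces to a direct adaptation of Lemma \ref{lem: simple pair decomposition}. When $\sigma = \bar{\sigma} = +1$, the relation $\beta = \bar{\beta}$ immediately gives $q''_1 = \bar{q}''_1$, and $\alpha = \bar{\alpha}$ combined with the superswitcher property of $b'_{1,i}$ on $A'$ gives $\psi_{1,i}(f_2) = \psi_{1,i}(\bar{f}_2)$ and $q'_1 f_1 = \bar{q}'_1 \bar{f}_1$. Running the same argument on the coordinate $j = 2$ yields $\psi_{2,i}(f_1) = \psi_{2,i}(\bar{f}_1)$, $q'_2 f_2 = \bar{q}'_2 \bar{f}_2$, and $q''_2 = \bar{q}''_2$. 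Bijectivity of the maps $\psi_{j,i}$ then forces $f_j = \bar{f}_j$, and left-cancellation forces $q'_j = \bar{q}'_j$, for $j = 1, 2$. The case $\sigma = \bar{\sigma} = -1$ is handled by the same argument after inversion.

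The main obstacle, absent from the non-symmetric proof, is the mixed-sign elimination; it is precisely to enable this step that $b'_{j,i}$ must be chosen as a \emph{super}switcher (with the stronger disjointness $A' \cap A' b'_{j,i} A' = \varnothing$) rather than a mere switcher. Everything else is a mechanical transcription of the argument in the previous section.
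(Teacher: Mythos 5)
Your proposal is correct and follows essentially the same route as the paper's proof: both first apply the superswitcher property of $b''_{1,i}$ to split the word, then rule out mixed signs via the disjointness $A' \cap A' b'_{1,i} A' = \varnothing$, and finally reduce the equal-sign case to the switcher argument of the non-symmetric section. Your version is just a cleaner packaging of the same steps (with the explicit $\alpha,\beta$ bookkeeping and length counts, which the paper leaves implicit).
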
 
\begin{proof}
Suppose we have an alternative decomposition:
$$g = (\bar{q}'_1 (\bar{f}_1 b'_1 \psi_{1,i}(\bar{f}_2) b''_1)^{\bar{\sigma}} \bar{q}''_1, \quad \bar{q}'_2 (\bar{f}_2 b'_2 \psi_{2,i}(\bar{f}_1) b''_2)^{\bar{\sigma}} \bar{q}''_2),$$
where $\bar{q}'_j \in A_{j,i}^{i+1}$, $\bar{f}_j \in F_{j,i}$, $\bar{q}'' \in A_{j,i}^i$, for $j = 1,2$.
First prove that $\sigma = \bar{\sigma}$. Assume the contrary, without loss of generality that $\sigma = -1$ and $\bar{\sigma} = +1$. We get that
$$q'_1 (f_1 b'_1 \psi_{1,i}(f_2) b''_1)^{\sigma} q''_1 = \bar{q}'_1 (\bar{f}_1 b'_1 \psi_{1,i}(\bar{f}_2) b''_1)^{\bar{\sigma}} \bar{q}''_1,$$ 
so
$$q'_1 (f_1 b'_1 \psi_{1,i}(f_2) b''_1)^{-1} q''_1 = \bar{q}'_1 (\bar{f}_1 b'_1 \psi_{1,i}(\bar{f}_2) b''_1) \bar{q}''_1.$$
From the definition of a superswitcher $b''_1$ we get that 
$q'_1  = \bar{q}'_1 \bar{f}_1 b'_1 \psi_{1,i}(\bar{f}_2),$
but this is impossible by definition of superwitcher $b'_1$. So we get that $\sigma = \bar{\sigma}$. 

Assume that $\sigma = \bar{\sigma} = +1$. 
We get 
$$q'_1 f_1 b'_1 \psi_{1,i}(f_2) b''_1 q''_1 = \bar{q}'_1 \bar{f}_1 b'_1 \psi_{1,i}(\bar{f}_2) b''_1 \bar{q}''_1,$$ so, since $b''_1$ is a superswitcher, 

$$q'_1 f_1 b'_1 \psi_{1,i}(f_2)  = \bar{q}'_1 \bar{f}_1 b'_1 \psi_{1,i}(\bar{f}_2),$$
and 
$$q''_1 = \bar{q}''_1.$$
Analogously,
$$q'_2 f_2 b'_2 \psi_{2,i}(f_1) b''_2 q''_2 = \bar{q}'_2 \bar{f}_2 b'_2 \psi_{2,i}(\bar{f}_1) b''_2 \bar{q}''_2,$$ so, since $b''_2$ is a superswitcher, 

$$q'_2 f_2 b'_2 \psi_{2,i}(f_1)  = \bar{q}'_2 \bar{f}_2 b'_2 \psi_{2,i}(\bar{f}_1),$$
and 
$$q''_2 = \bar{q}''_2.$$
Now using the superswitcher property for $b'_j$, we get 
$$\psi_{j,i}(f'_{2-j}) = \psi_{j,i}(\bar{f}'_{2-j}),$$
and
$$q'_j f'_j = \bar{q}'_j \bar{f}'_j.$$
Hence, $f'_j = f'_j$ and $f''_j = f''_j$ since functions $\psi_{j,i}$ are bijections, and $q'_j = \bar{q}'_j$, $q''_j = \bar{q}''_j$.

It is left to handle the case $\sigma = \bar{\sigma} = -1$, but this is done the similar way.
\end{proof}

We are ready to define function $p: \bigcup W_n \to \Gamma$.
For 
$$g = (q'_1 (f'_1 b'_1 \psi_{1,i}(f'_2, f''_2) b''_1 f''_1)^{\sigma} q''_1, \quad q'_2 (f'_2 b'_2 \psi_{2,i}(f'_1, f''_1) b''_2 f''_2)^{\sigma} q''_2),$$
where $q'_j \in A_{j,i}^{i+1}$, $f'_j, f''_j \in F_{j,i}$, $q'' \in A_{j,i}^i$,
we set $p(g) = (q'_1, q'_2)$. This is a well-defined on $\bigcup_n W_n$ function by the previous lemma and pairwise disjointness of sets $W_n$. 

In a similar fashion to the previous section, we can check the requirements of Lemma \ref{lem: main non-triviality}. So the Poison-Furstenberg boundary $\partial(\Gamma, \nu)$ is non-trivial and the action of $\Gamma$ on it is essentially free. Note, that we already proved that the boundaries $\partial(G_i, \pr_i(\nu))$, $i = 1,2$, are trivial.

\end{document}